\definecolor{darkgreen}{rgb}{0.00,0.5,0.00}
\renewcommand*{\backrefalt}[4]{%
    \ifcase #1 \footnotesize{(Not cited.)}%
    \or        \footnotesize{(Cited on page~#2)}%
    \else      \footnotesize{(Cited on pages~#2)}%
    \fi}
\newtheorem{theorem}{Theorem}
\newtheorem{assumption}{Assumption}
\theoremstyle{definition}
\newtheorem{lemma}{Lemma}
\newtheorem{proposition}{Proposition}
\title{
\toptitlebar
{{\center\baselineskip 18pt
                      {\Large\bf Regularized Newton Method with Global $\mathcal{O}(1/k^2)$ Convergence}}
} 
\bottomtitlebar}
\date{}
\author{\textbf{Konstantin Mishchenko}\thanks{CNRS, École Normale Supérieure, Inria, \href{mailto:konsta.mish@gmail.com}{konsta.mish@gmail.com}}}
\newcommand{\cI}{\mathcal{I}}
\newcommand{\cO}{\mathcal O}
\newcommand{\mA}{\mathbf{A}}
\newcommand{\mI}{\mathbf{I}}
\newcommand{\mJ}{\mathbf{J}}
\newcommand{\R}{\mathbb R}
\newcommand{\eqdef}{\stackrel{\text{def}}{=}}
\renewcommand{\phi}{\varphi}
\DeclareMathOperator{\argmin}{argmin}
\def\<#1,#2>{\langle #1,#2\rangle}
\def\toptitlebar{\hrule height1pt \vskip .25in} 
\def\bottomtitlebar{\vskip .22in \hrule height1pt \vskip .3in}
\begin{document}
\maketitle
\begin{abstract}
    We present a Newton-type method that converges fast from any initialization and for arbitrary convex objectives with Lipschitz Hessians. We achieve this by merging the ideas of cubic regularization with a certain adaptive Levenberg--Marquardt penalty. In particular, we show that the iterates given by $x^{k+1}=x^k - \bigl(\nabla^2 f(x^k) + \sqrt{H\|\nabla f(x^k)\|} \mathbf{I}\bigr)^{-1}\nabla f(x^k)$, where $H>0$ is a constant, converge globally with a $\mathcal{O}(\frac{1}{k^2})$ rate. Our method is the first variant of Newton's method that has both cheap iterations and provably fast global convergence. Moreover, we prove that locally our method converges superlinearly when the objective is strongly convex. To boost the method's performance, we present a line search procedure that does not need prior knowledge of $H$ and is provably efficient.
\end{abstract}

\section{Introduction}
\textbf{Overview.} The history of Newton's method spans over several centuries and the method has become famous for being extremely fast, and infamous for converging only from initialization that is close to a solution. Despite the latter drawback, Newton's method is a cornerstone of convex optimization and it motivated the development of numerous popular algorithms, such as quasi-Newton and trust-region procedures. Its applications and extensions are countless, so we refer to the study in~\cite{conn2000trust} that lists more than 1,000 references in total.

Although widely acknowledged, the extreme behaviour of Newton's method is still startling. Why does it converge so efficiently from one initialization and hopelessly diverge from a tiny perturbation of the same initialization? This oddity encourages us to look for a method with a bit slower but more robust convergence, but the existing theory does not offer any good option. All global variants that we are aware of make iterations more expensive by requiring a line search~\cite{crockett1955gradient, ortega1970iterative, nesterov2013}, solving a subproblem~\cite{nesterov2006cubic, nesterov2008accelerating}, or solving a series of problems~\cite{marteau2019globally, nesterov2020superfast}. Among them, line search is often selected by classic textbooks~\cite{boyd2004convex, nesterov2013} as the way to globalize Newton's method, but it is not guaranteed to converge even for convex functions with Lipschitz Hessians~\cite{jarre2016simple, mascarenhas2007divergence}. Unfortunately, and somewhat surprisingly, despite decades of research effort and a strong motivation for practical purposes, no variant of Newton's method is known to both converge globally on the class of smooth convex functions and preserve its simple and easy-to-compute update.

The goal of our work is to show that there is, in fact, a simple fix. The core idea of our approach is to employ an adaptive variant of Levenberg--Marquardt regularization to make the update efficient, and to leverage the advanced theory of cubic regularization~\cite{nesterov2006cubic} to find an adaptive rule that would work provably. The rest of our paper is organized as follows. Firstly, we formally state the problem, and expand on the related work and motivating approaches. In \Cref{sec:theory}, we give theoretical guarantees of our algorithm and outline the proof. Finally, in \Cref{sec:numerical}, we discuss the numerical performance of our methods and propose ways to make them faster.

\subsection{Background}
In this work, we are interested in solving the unconstrained minimization problem
\begin{equation}
  \min_{x\in \R^d}\, f(x) \label{eq:min_f}
\end{equation}
where $f\colon\R^d\to \R$ is a twice-differentiable function with Lipschitz Hessian, as well as in the non-linear least-squares problem
\begin{equation}
    \min_{x\in \R^d}\, \frac{1}{2}\|F(x)\|^2, \label{eq:least_squares}
\end{equation}
where $F\colon \R^d\to \R^d$ is an operator with Lipschitz Jacobian.

First-order methods, such as gradient descent and its stochastic variants \cite{Bottou18, Gower2019sgd, mishchenko2020random}, are often the methods of choice to solve both of these problems~\cite{bottou2010large, Bottou18}. Their iterations are easy to parallelize and cheap since they require only $\cO(d)$ computation. However, for problems with ill-conditioned Hessians, the iteration convergence of first-order methods is very slow and the benefit of cheap iterations is often not sufficient to compensate for that. 

Second-order algorithms, on the other hand, may take just a few iterations to converge. For instance, Newton's method minimizes at each step a quadratic approximation of problem~\eqref{eq:min_f} to improve dependency on the Hessian properties. Unfortunately, the described basic variant of Newton's method is unstable: it works only for strongly convex problems and may diverge exponentially when initialized not very close to the optimum. Furthermore, each iteration requires solving a system with a potentially ill-conditioned matrix, which might lead to numerical precision errors and further instabilities.

There are several ways to globalize Newton and quasi-Newton updates. The simplest and the most popular choice is to use a line search procedure~\cite{grippo1986nonmonotone}, which takes the update direction of Newton's method and finds the best step length in that direction. Unfortunately, this approach suffers from several issues. First of all, the Hessian might be ill-conditioned or even singular, in which case the direction is not well defined. Secondly,  global analyses of line search do not show a clear theoretical advantage over first-order methods~\cite{nesterov2006cubic}. Finally and most importantly, several recent works~\cite{jarre2016simple, mascarenhas2007divergence} have shown that on some convex problems with Lipschitz Hessians, Newton's method with line search may never converge.

Another common approach is to derive a sequence of subproblems that have solutions close enough to the current point~\cite{mokhtari2016adaptive, marteau2019globally}. Alas, just liked damped Newton method~\cite{nesterov2013}, this approach depends on the self-concordance assumption, which is essentially a combination of strong convexity and Hessian smoothness~\cite{rodomanov2020greedy} and does not hold in many applications. 

The first method to achieve a superior global complexity guarantee on a large class of functions was \emph{cubic Newton} method~\cite{griewank1981modification, nesterov2006cubic}, which is based on cubic regularization. It combines all known advantages of full-Hessian second-order methods: superlinear local convergence, adaptivity to the problem curvature and second-order stationarity guarantees on nonconvex problems. Its main limitation, which we are going to address here, is the expensive iteration due to the nontrivial subproblem that requires a special solver.

Our approach to removing the limitation of cubic Newton is based on another idea that came from the literature on non-linear least-squares problem: \emph{quadratic} regularization of Levenberg and Marquardt (LM)~\cite{levenberg1944method, marquardt1963algorithm}. The regularization has several notable benefits: it allows the Hessian to have some negative eigenvalues, it improves the subproblem's conditioning, and makes the update robust to inaccuracies. And most importantly, its update requires solving a single linear system.

\subsection{LM and cubic Newton}\label{sec:lm_and_cubic}
Let us discuss a very simple connection between the cubic Newton method~\cite{griewank1981modification, nesterov2006cubic} and the Levenberg--Marquardt method for~\eqref{eq:min_f}. The cubic Newton update can be written implicitly as
\begin{equation*}
    x^{k+1}
    = x^k - \bigl(\nabla^2 f(x^k) + H\|x^{k+1}-x^k\| \mI \bigr)^{-1} \nabla f(x^k),
\end{equation*}
where $H>0$ is a constant, $\mI$ is the identity matrix, and $\|x^{k+1} - x^k\|$ inside the inversion makes this update implicit. 
The Levenberg--Marquardt method, in turn, is parameterized by a sequence $\{\lambda_k\}_{k=0}^{\infty}$ (usually $\lambda_k \equiv \lambda>0$) and uses the update
\begin{equation}
    x^{k+1}
    = x^k - \bigl(\nabla^2 f(x^k) + \lambda_k \mI \bigr)^{-1} \nabla f(x^k). \label{eq:lm_update}
\end{equation}
The similarity is striking and was immediately pointed out in the work that analyzed cubic Newton~\cite{nesterov2018lectures}. Nevertheless, this connection has not yet been exploited to obtain a better method, except for deriving line search procedures~\cite{birgin2017use}. 

Levenberg--Marquardt algorithm is usually considered with constant regularization $\lambda_k=\lambda>0$. However, one may notice that whenever $\lambda_k\approx H\|x^{k+1}-x^k\|$, the two updates should produce similar iterates. How can we make the approximation hold? Our main idea is to leverage the property of the cubic update that $\|x^{k+1}-x^k\|\approx \sqrt{\frac{1}{H}\|\nabla f(x^{k+1})\|}$ (see Lemma~3 in~\cite{nesterov2006cubic}) and use $\lambda_k= \sqrt{H\|\nabla f(x^k)\|}$ to guarantee $\lambda_k\ge H\|x^{k+1}-x^k\|$. And since this choice of $\lambda_k$ does not depend on $x^{k+1}$, the update in~\eqref{eq:lm_update} is a closed-form expression.

We shall also leverage these ideas to analyze Levenberg--Marquardt algorithm for~\eqref{eq:least_squares}. The procedure we consider is given by the following update rule:
\begin{equation}
    x^{k+1}=x^k - \bigl(\mJ_k^\top \mJ_k + \lambda_k\mI \bigr)^{-1} \mJ_k^\top F(x^k), \label{eq:ls_update}
\end{equation}
where $F$ is the operator in~\eqref{eq:least_squares} and $\mJ_k=\partial F(x^k)$ is its Jacobian.

\subsection{Related work}
We discuss the related literature for problems~\eqref{eq:min_f} and~\eqref{eq:least_squares} together as they are highly related. When discussing potential choices of $\lambda_k$ below, we also ignore all constant factors and only discuss how $\lambda_k$ depends on the gradient norm. Among papers on Levenberg--Marquardt method, we mention those that use regularization with $\lambda_k$ depending on $\|F(x^k)\|$ or $\|\mJ_k^\top F(x^k)\|$, where $\mJ_k = \partial F(x^k)$. For simplicity, we do not distinguish between the two regularizations in our literature review.

\textbf{Newton and cubic Newton literature.} The literature on Newton, quasi-Newton and cubic regularization is well developed~\cite{conn2000trust} and the theory was propelled by the advances of the work~\cite{nesterov2006cubic}. Tight upper and lower bounds on cubic regularization are available in~\cite{nesterov2008accelerating}. Its many variants such as parallel~\cite{dunner2018distributed, crane2020dino, soori2020dave}, subspace~\cite{gower2019rsn, hanzely2020stochastic}, incremental~\cite{rodomanov2016superlinearly} and stochastic~\cite{doikov2018randomized, kovalev2019stochastic} schemes continue to attract a lot of attention. 

A particularly relevant to ours is the work of \cite{polyak2009regularized} which suggested to use $\lambda_k\propto \|\nabla f(x^k)\|$ to attain both sublinear global and superlinear local convergence. The main limitation of \cite{polyak2009regularized} is that its global rate is $\mathcal{O}\left(\frac{1}{k^{1/4}}\right)$, which is drastically slower than our $\mathcal{O}(\frac{1}{k^2})$ rate. The work~\cite{hanzely2020stochastic} also stands out with its one-dimensional cubic Newton procedure that allows for explicit update expression and enjoys global convergence. Alas, despite using second derivatives, it fails to show any rate improvement over first-order coordinate descent. Finally, in \cite{ueda2014regularized}, the authors proposed a general family of Newton updates with gradient-norm regularization that allows the objective to be nonconvex, but their rate for convex functions is slightly slower rate than ours.

\textbf{Levenberg--Marquardt (LM) literature.} The question of how to choose $\lambda_k$ has been an important topic in the literature for many decades \cite{fletcher1971modified, more1978levenberg}. The early work of \cite{solodov1998globally} proposed the choices $\lambda_k\propto \|F(x^k)\|$ and  $\lambda_k\propto \max\{\|F(x^k)\|, \sqrt{\|F(x^k)\|}\}$ and showed global convergence, albeit without any rate. Many other works \cite{yamashita2001rate,dan2002convergence,fan2005quadratic,li2004regularized, marumo2020constrained,bergou2019convergence} studied local superlinear convergence for $\lambda_k \propto \|F(x^k)\|^\delta$, but, to the best of our knowledge, all prior works require line search with unknown overhead, and there is no result establishing fast global convergence. More choices of $\lambda_k$ are available, e.g., see the survey in \cite{marumo2020constrained}, but they seem to suffer from the same issue.


There has also been some idea exchange between the literature on minimization~\eqref{eq:min_f} and least-squares~\eqref{eq:least_squares}. Just as Levenberg--Marquardt was proposed for~\eqref{eq:least_squares} and found applications in minimization~\eqref{eq:min_f}, cubic Newton with a line search has also been applied to the least-squares problem~\cite{cartis2013evaluation}. A more general inner linearization framework was also analyzed in~\cite{doikov2021optimization}. However, the algorithms of \cite{cartis2013evaluation, doikov2021optimization} used updates different from~\eqref{eq:ls_update}, hence, they are not directly related to the algorithms we are interested in.

\textbf{Line search, trust-region and counterexamples.} The divergence issues of Newton's method with line search seems to be a relatively unknown fact. For instance, classic books on convex optimization~\cite{boyd2004convex, nesterov2013} present Newton's method with line search procedures, which can be explained by the fact that these books were written when cubic Newton was not known. Nevertheless, line search and trust-region variants of Newton's method have been shown to fail on convex~\cite{jarre2016simple, mascarenhas2007divergence, bolte2020curiosities} and nonconvex examples~\cite{cartis2012much}.

\textbf{Dynamical systems.} A connection of Newton's method to dynamical systems with faster convergence has been observed in a prior work that used the connection to analyze Levenberg--Marquardt with constant regularization~\cite{attouch2011continuous}. The connection was also used in~\cite{attouch2015dynamic} to propose a regularized Newton method that runs an expensive subroutine to assert $\lambda_k \approx \|x^{k+1}-x^k\|$, which makes it almost equivalent to a cubic Newton step. A conceptual advantage of our analysis is that we do not require this approximation to hold. 

\textbf{High-order methods.} Many other theoretical works have extended the framework of second-order optimization to high-order methods that rely tensors of derivatives up to order $p$. See, for instance, works~\cite{nesterov2019implementable,cartis2019universal,doikov2020inexact} for basic analysis and \cite{baes2009estimate,nesterov2019implementable,gasnikov2019near} for accelerated variants.

\textbf{Applications.} The applications of Levenberg--Marquardt penalty are extremely diverse and recent uses include control~\cite{tassa2014control}, reinforcement learning~\cite{karnchanachari20practical, bechtle2020curious}, computer vision~\cite{cao2018openpose}, molecular chemistry~\cite{bannwarth2019gfn2}, linear programming~\cite{iqbal2015levenberg} and deep learning~\cite{zappone2019wireless}. Since LM regularization can mitigate negative eigenvalues of the Hessian in nonconvex optimization, there is a continuing effort to combine it with Hessian estimates based on backpropagation~\cite{martens2010deep}, quasi-Newton~\cite{ren2019efficient} and Kronecker-factored curvature~\cite{martens2015optimizing, goldfarb2020practical}. In all of these works, LM penalty is merely used as a heuristic that can stabilize aggressive second-order updates and is not shown to help theoretically. Moreover, it is used as a constant, in contrast to our adaptive approach.

\subsection{Contributions}
Our goal is twofold. On the one hand, we are interested in designing methods that are useful for applications and can be used without any change as black-box tools. On the other hand, we hope that our theory will serve as the basis for further study of globally-convergent second-order and quasi-Newton methods with superior rates. Although many of the ideas that we discuss in this paper are not new, our analysis, however, is the first of its kind. We hope that our theory will lead to appearance of new methods that are motivated by the theoretical insights of our work.
\begin{algorithm}[t]
\caption{Globally-convergent Regularized Newton Method for minimization~\eqref{eq:min_f}}
\label{alg:global_newton}
\begin{algorithmic}[1]
    \State \textbf{Input:} $x^0 \in \R^d$, $H>0$
    \For{$k = 0,1,\dots$}
 		\State $\lambda_k = \sqrt{H\|\nabla f(x^k)\|}$
 		\State $x^{k+1}=x^k - (\nabla^2 f(x^k)+\lambda_k \mI)^{-1}\nabla f(x^k)$ \Comment{Compute $x^{k+1}$ by solving a linear system}
 	\EndFor
\end{algorithmic}	
\end{algorithm}

We summarize our key results as follows:
\begin{enumerate}
    \item We obtain the first closed-form Newton-like method with global $\cO\left(\frac{1}{k^2}\right)$ convergence rate on convex functions with Lipschitz Hessians.
    \item We prove that the same algorithm achieves a superlinear convergence rate for strongly convex functions when close to the solution.
    \item We present a line search procedure that allows to run the method without any parameters. Moreover, in contrast to the results for Newton's method and its cubic regularization, our line search provably requires on average only two matrix inversions per iteration.
    \item We extend our theory to the non-linear least squares problem.
\end{enumerate}

\section{Convergence theory}\label{sec:theory}
\epigraph{If I have seen further it is by standing on ye sholders of Giants}{\textit{Isaac Newton}}
In this section, we prove convergence of our regularized Newton method and discuss several extensions. The formal description of our method is given in~\Cref{alg:global_newton}. As reflected by the section's epigraph, most of our findings are based on the prior work of two Giants, Nesterov and Polyak~\cite{nesterov2006cubic}.
\subsection{Notation and main assumption}
We will denote by $\cO(\cdot)$ the non-asymptotic big-O notation that hides all constants and only keeps the dependence on the iteration counter $k$.

Our theory is based on the following assumption about second-order smoothness, which is also the key tool in proving the convergence of cubic Newton~\cite{nesterov2006cubic}.
\begin{assumption}\label{as:hessian_smooth}
    We assume that there exists a constant $H>0$ such that for any $x, y\in \R^d$
    \begin{align}
        &f(y)
        \le f(x) + \<\nabla f(x), y-x> + \frac{1}{2}\<\nabla^2 f(x)(y-x), y-x> + \frac{H}{3}\|y-x\|^3, \label{eq:taylor}\\
        &\|\nabla f(y) - \nabla f(x) - \nabla^2 f(x)(y-x)\|\le H\|x-y\|^2. \label{eq:grad_dif_hess}
    \end{align}
    Both of these equations hold if the Hessian of $f$ is $(2H)$-Lipschitz, that is, if for all $x,y\in\R^d$ we have $\|\nabla^2 f(x)-\nabla^2 f(y)\|\le 2H\|x-y\|$.
\end{assumption}
We refer the reader to Lemma~1 in~\cite{nesterov2006cubic} for the proof that bounds~\eqref{eq:taylor} and \eqref{eq:grad_dif_hess} follow from Lipschitzness of $\nabla^2 f$. We will sometimes refer to $H$ as the \emph{smoothness} constant.

Following the literature on cubic regularization~\cite{nesterov2006cubic}, we will also use the following notation throughout the paper:
\begin{equation*}
    r_k
    \eqdef \|x^{k+1}-x^k\|.
\end{equation*}
For better understanding of our results, we are going to present some lemmas formulated for the update
\begin{equation}
    x^{k+1}
    = x^k - \bigl(\nabla^2 f(x^k) + \lambda_k \mI \bigr)^{-1}\nabla f(x^k), \label{eq:reg_newton_iteration}
\end{equation}
without specifying the value of $\lambda_k$.
\subsection{Convex analysis}
Before we proceed to the theoretical analysis, we summarize all of the obtained results in \Cref{tab:summary}. The reader may use the table to understand the basic findings of our analysis.

We begin with a theory for convex objectives $f$. There are nice properties that make the convex analysis simpler and allow us to obtain fast rates. One particularly handy property is that for any point $x\in\R^d$, the Hessian at $x$ is positive semi-definite, $\nabla^2 f(x)\succcurlyeq 0$.

\begin{lemma}\label{lem:update_direction}
    For any $\lambda_k\in\R^d$ such that \eqref{eq:reg_newton_iteration} is defined, the iteration in~\eqref{eq:reg_newton_iteration} satisfies
    \begin{equation}
        \lambda_k(x^{k+1} - x^k) = - \bigl(\nabla f(x^k) + \nabla^2 f(x^k)(x^{k+1}-x^k)\bigr). \label{eq:lm_identity}
    \end{equation}
\end{lemma}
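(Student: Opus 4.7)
The plan is straightforward: this is purely an algebraic rearrangement of the definition of the update rule in~\eqref{eq:reg_newton_iteration}, so no real estimation or inequality manipulation is required. The only thing I need to use is that $(\nabla^2 f(x^k) + \lambda_k \mI)$ is invertible, which is implicit in the hypothesis that \eqref{eq:reg_newton_iteration} is well-defined.

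I would start by left-multiplying both sides of \eqref{eq:reg_newton_iteration}, rewritten as
\[
x^{k+1} - x^k = -\bigl(\nabla^2 f(x^k) + \lambda_k \mI\bigr)^{-1}\nabla f(x^k),
\]
by $\bigl(\nabla^2 f(x^k) + \lambda_k \mI\bigr)$ to clear the inverse, obtaining
\[
\bigl(\nabla^2 f(x^k) + \lambda_k \mI\bigr)(x^{k+1}-x^k) = -\nabla f(x^k).
\]
Then I would expand the left-hand side using linearity into $\nabla^2 f(x^k)(x^{k+1}-x^k) + \lambda_k(x^{k+1}-x^k)$ and isolate the $\lambda_k$ term on the left, yielding exactly~\eqref{eq:lm_identity}.

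There is no real obstacle here; the statement is essentially a labeled identity that the paper wants to reuse later (e.g., to interpret the Levenberg--Marquardt direction as the stationarity condition of the regularized quadratic model, or to relate $\lambda_k r_k$ to the Taylor remainder $\nabla f(x^k) + \nabla^2 f(x^k)(x^{k+1}-x^k)$ that Assumption~\ref{as:hessian_smooth} bounds via \eqref{eq:grad_dif_hess}). I would keep the proof to two or three lines and avoid any extra commentary, since the content is entirely a rewriting of the algorithm's definition.
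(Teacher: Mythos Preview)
Your proposal is correct and matches the paper's own proof essentially verbatim: the paper simply says the identity follows by multiplying the update rule~\eqref{eq:reg_newton_iteration} by $(\nabla^2 f(x^k) + \lambda_k\mI)$, which is exactly the manipulation you describe.
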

\begin{proof}
    This identity follows by multiplying the update rule in~\eqref{eq:reg_newton_iteration} by $(\nabla^2 f(x^k) + \lambda_k\mI)$.
\end{proof}
The meaning of \Cref{lem:update_direction} is very simple: the update of regularized Newton points towards negative gradient, which is a local descent direction, corrected by second-order information $\nabla^2 f(x^k)(x^{k+1}-x^k)$. The correction is important because it allows the algorithm to better approximate the implicit update under \Cref{as:hessian_smooth} as $\nabla f(x^k) + \nabla^2 f(x^k)(x^{k+1}-x^k) \approx \nabla f(x^{k+1})$.

Another interesting implication of \Cref{lem:update_direction} is that $\lambda_k$ plays the role of the reciprocal stepsize, since equation~\eqref{eq:lm_identity} is equivalent to
\begin{align*}
    x^{k+1} 
    \overset{\eqref{eq:lm_identity}}{=} x^k - \frac{1}{\lambda_k}\bigl(\nabla f(x^k) + \nabla^2 f(x^k)(x^{k+1}-x^k)\bigr).
\end{align*}
Thus, overall, we have $x^{k+1}\approx x^k - \frac{1}{\lambda_k}\nabla f(x^{k+1})$, which means that we approximate the implicit (proximal) update. The implicit update does not have any restrictions on the stepsize, so the importance of choosing $\lambda_k$ large lies in keeping the approximation valid. The reader interested in why we would want to approximate the implicit update may consult~\cite{nesterov2020inexact}.
\begin{lemma}\label{lem:new_grad_bound}[Regularization is big enough]
    Let \Cref{as:hessian_smooth} hold and $f$ be convex. For any $\lambda_k\ge \sqrt{H\|\nabla f(x^k)\|}$, we have
    \begin{align}
        H r_k 
        &\le \lambda_k, \label{eq:hrk_lambdak}\\
        \|\nabla f(x^{k+1})\|
        &\le 2\lambda_k r_k \le 2\|\nabla f(x^k)\|. \label{eq:new_grad_bound}
    \end{align}
\end{lemma}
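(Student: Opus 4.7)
The plan is to prove the two inequalities in order, since the first essentially gives a bound on the step size $r_k$ that feeds directly into the second.

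For \eqref{eq:hrk_lambdak}, I would first bound $r_k$ using the fact that convexity gives $\nabla^2 f(x^k) \succcurlyeq 0$. This means $\nabla^2 f(x^k) + \lambda_k \mI \succcurlyeq \lambda_k \mI$, so the operator norm of the inverse is at most $1/\lambda_k$. Applying this to the update rule yields
\begin{equation*}
    r_k = \|(\nabla^2 f(x^k) + \lambda_k \mI)^{-1} \nabla f(x^k)\| \le \frac{\|\nabla f(x^k)\|}{\lambda_k}.
\end{equation*}
Multiplying by $H$ and invoking the hypothesis $\lambda_k^2 \ge H\|\nabla f(x^k)\|$ gives $Hr_k \le H\|\nabla f(x^k)\|/\lambda_k \le \lambda_k$, which is \eqref{eq:hrk_lambdak}. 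Note this uses convexity in an essential way (otherwise the inverse could be much larger in norm), which is why the lemma is stated under convexity.

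For \eqref{eq:new_grad_bound}, the plan is to combine \Cref{lem:update_direction} with the second-order smoothness bound \eqref{eq:grad_dif_hess}. Applying \eqref{eq:grad_dif_hess} at the pair $(x^k, x^{k+1})$ and substituting the identity from \Cref{lem:update_direction} gives
\begin{equation*}
    \|\nabla f(x^{k+1}) + \lambda_k (x^{k+1} - x^k)\|
    = \|\nabla f(x^{k+1}) - \nabla f(x^k) - \nabla^2 f(x^k)(x^{k+1}-x^k)\|
    \le H r_k^2.
\end{equation*}
The triangle inequality then yields $\|\nabla f(x^{k+1})\| \le H r_k^2 + \lambda_k r_k = r_k(H r_k + \lambda_k)$, and using \eqref{eq:hrk_lambdak} to bound $H r_k$ by $\lambda_k$ produces $\|\nabla f(x^{k+1})\| \le 2\lambda_k r_k$. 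The final inequality $2\lambda_k r_k \le 2\|\nabla f(x^k)\|$ follows by plugging in the bound $r_k \le \|\nabla f(x^k)\|/\lambda_k$ obtained at the very start.

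The argument is genuinely short once the right pieces are assembled; the only conceptual point to pin down is why convexity is needed, namely to control $\|(\nabla^2 f(x^k) + \lambda_k \mI)^{-1}\|$ by $1/\lambda_k$. Everything else is triangle-inequality bookkeeping, so I do not expect any serious obstacle.
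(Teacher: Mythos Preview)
Your proposal is correct and follows essentially the same argument as the paper: bound $r_k \le \|\nabla f(x^k)\|/\lambda_k$ via $\nabla^2 f(x^k)\succcurlyeq 0$, combine with $\lambda_k^2 \ge H\|\nabla f(x^k)\|$ to get \eqref{eq:hrk_lambdak}, and then use the identity from \Cref{lem:update_direction} together with \eqref{eq:grad_dif_hess} and a triangle inequality to obtain \eqref{eq:new_grad_bound}. There is no meaningful difference between your route and the paper's.
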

\begin{proof}
    By our choice of $\lambda_k$, we have $\|\nabla f(x^k)\|\le\frac{\lambda_k^2}{H}$. Therefore, using $\nabla^2 f(x^k)\succcurlyeq 0$, we derive
    \begin{equation*}
        r_k=\|x^{k+1}-x^k\|
        \overset{\eqref{eq:reg_newton_iteration}}{=} \|(\nabla^2 f(x^k) + \lambda_k\mI)^{-1}\nabla f(x^k)\|
        \le \frac{1}{\lambda_k}\|\nabla f(x^k)\|
        \le \frac{1}{\lambda_k}\frac{\lambda_k^2}{H}
        = \frac{\lambda_k}{H}.
    \end{equation*}
    Thus, we have $Hr_k\le \lambda_k$ and $\lambda_kr_k\le \|\nabla f(x^k)\|$, which proves~\eqref{eq:hrk_lambdak} and the second part of~\eqref{eq:new_grad_bound}.
    Combining the implicit update formula from \Cref{lem:update_direction} and triangle inequality, we also get
    \begin{align*}
        \|\nabla f(x^{k+1})\|
        &\overset{\eqref{eq:lm_identity}}{=} \|\nabla f(x^{k+1}) - \nabla f(x^k) - \nabla^2 f(x^{k})(x^{k+1}-x^k) - \lambda_k (x^{k+1}-x^k)\| \\
        &\le \|\nabla f(x^{k+1}) - \nabla f(x^k) - \nabla^2 f(x^{k})(x^{k+1}-x^k)\| + \lambda_k \|x^{k+1}-x^k\| \\
        &\overset{\eqref{eq:grad_dif_hess}}{\le} H\|x^{k+1}-x^k\|^2 + \lambda_k \|x^{k+1}-x^k\| \\
        &= H r_k^2 + \lambda_k r_k \\
        &\overset{\eqref{eq:hrk_lambdak}}{\le} 2\lambda_k r_k.
    \end{align*}
\end{proof}
\begin{table}[t]
    \caption{A summary of the main ideas and theoretical claims of our work. For reference, $r_k = \|x^{k+1}-x^k\|$ and $\lambda_k = \sqrt{H\|\nabla f(x^k)\|}$, where $H>0$ is given by \Cref{as:hessian_smooth}.}
    \label{tab:summary}
    \centering
    \small
    \begin{tabular}{lll}
        \toprule
        \textbf{Idea/fact} & \textbf{Expression} & \textbf{Reference} \\
        \midrule
         Cubic-Newton update & $x^{k+1} = x^k - \bigl(\nabla^2 f(x^k) + Hr_k\mI \bigr)^{-1}\nabla f(x^k)$ & Analyzed by~\cite{nesterov2006cubic} \\
         Update of \Cref{alg:global_newton} & $x^{k+1} = x^k - \bigl(\nabla^2 f(x^k) + \lambda_k\mI \bigr)^{-1}\nabla f(x^k)$ & \Cref{alg:global_newton} \\
         Regularization & $\lambda_k = \sqrt{H\|\nabla f(x^k)\|}$ & \Cref{alg:global_newton} \\
         Update direction & $x^{k+1} = x^k - \frac{1}{\lambda_k}(\nabla f(x^k) + \nabla^2 f(x^k)(x^{k+1}-x^k))$ & \Cref{lem:update_direction} \\
         Ideal condition & $\lambda_k \approx Hr_k$ (this might not be true and is not proved) &  --- \\
         Satisfied condition & $\lambda_k \ge Hr_k$ & \Cref{lem:new_grad_bound} \\
         Descent & $f(x^{k+1}) \le f(x^k) - \frac{2}{3}\lambda_k r_k^2$ & \Cref{lem:descent} \\
         Steady iteration & $k\in \cI_{\infty}= \{i\in \mathbb{N}\colon \|\nabla f(x^{i+1})\|\ge \frac{1}{4}\|\nabla f(x^i)\|\}$ & Proof of \Cref{th:main} \\
         Sharp iteration & $k\not\in \cI_{\infty}$ & Proof of \Cref{th:main} \\
         No blow-up & $\|\nabla f(x^{k+1})\|\le 2\|\nabla f(x^k)\|$ & \Cref{lem:new_grad_bound}\\
        \bottomrule 
    \end{tabular}
\end{table}
Thus, we have established that our choice of regularization implies $\lambda_k \ge Hr_k$. Remember that, as discussed in Section~\ref{sec:lm_and_cubic}, $Hr_k$ is the value of regularization that is used implicitly in cubic Newton. As our goal was to approximate cubic Newton, the lower bound on $\lambda_k$ shows that we are moving in the right direction.

Next, let us establish a descent lemma that guarantees a decrease of functional values.
\begin{lemma}\label{lem:descent}
    Let $f$ be convex and satisfy Assumption~\ref{as:hessian_smooth}. If we choose $\lambda_k = \sqrt{H\|\nabla f(x^k)\|}$, then
    \begin{align}
        f(x^{k+1})
        \le f(x^k) - \frac{2}{3}\lambda_k r_k^2. \label{eq:descent}
    \end{align}
\end{lemma}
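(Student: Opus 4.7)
The plan is to apply the second-order Taylor upper bound from \Cref{as:hessian_smooth} at $x^{k+1}$, then use the update identity in \Cref{lem:update_direction} to rewrite the gradient inner product in terms of $\lambda_k r_k^2$ and the Hessian quadratic form, and finally use convexity (so $\nabla^2 f(x^k)\succcurlyeq 0$) together with the inequality $Hr_k\le \lambda_k$ from \Cref{lem:new_grad_bound} to absorb the cubic error term.

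Concretely, first I would write
\begin{equation*}
    f(x^{k+1}) \le f(x^k) + \<\nabla f(x^k),x^{k+1}-x^k> + \tfrac{1}{2}\<\nabla^2 f(x^k)(x^{k+1}-x^k),x^{k+1}-x^k> + \tfrac{H}{3}r_k^3.
\end{equation*}
Next, by \Cref{lem:update_direction}, $\nabla f(x^k) = -\lambda_k(x^{k+1}-x^k) - \nabla^2 f(x^k)(x^{k+1}-x^k)$, so taking the inner product with $x^{k+1}-x^k$ gives $\<\nabla f(x^k),x^{k+1}-x^k> = -\lambda_k r_k^2 - \<\nabla^2 f(x^k)(x^{k+1}-x^k),x^{k+1}-x^k>$. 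Substituting this into the Taylor bound, the Hessian quadratic form combines into $-\tfrac{1}{2}\<\nabla^2 f(x^k)(x^{k+1}-x^k),x^{k+1}-x^k>$, which is nonpositive since $f$ is convex, and can be discarded.

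This leaves $f(x^{k+1}) \le f(x^k) - \lambda_k r_k^2 + \tfrac{H}{3}r_k^3$. Finally, applying $Hr_k\le \lambda_k$ from \eqref{eq:hrk_lambdak} gives $\tfrac{H}{3}r_k^3 \le \tfrac{1}{3}\lambda_k r_k^2$, which yields the claimed bound $f(x^{k+1})\le f(x^k)-\tfrac{2}{3}\lambda_k r_k^2$.

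There is no real obstacle here; the proof is essentially a one-line consequence of the Taylor inequality once the identity of \Cref{lem:update_direction} is used to kill the linear term. The only subtle point is making sure the signs and the Hessian-quadratic terms combine correctly (so that convexity can be used to drop a nonpositive quantity) and that the cubic residual $\tfrac{H}{3}r_k^3$ is controlled by the already-established bound $Hr_k\le\lambda_k$, which is precisely what justifies the particular choice $\lambda_k=\sqrt{H\|\nabla f(x^k)\|}$.
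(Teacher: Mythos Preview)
Your proof is correct and follows essentially the same approach as the paper: apply the Taylor upper bound~\eqref{eq:taylor}, use the identity from \Cref{lem:update_direction} to convert the gradient term into $-\lambda_k r_k^2$ minus a Hessian quadratic form, drop that quadratic form by convexity, and absorb the cubic remainder via $Hr_k\le\lambda_k$ from \Cref{lem:new_grad_bound}. The only cosmetic difference is that the paper groups $\nabla f(x^k)+\nabla^2 f(x^k)(x^{k+1}-x^k)$ first and then applies~\eqref{eq:lm_identity}, whereas you solve~\eqref{eq:lm_identity} for $\nabla f(x^k)$ and substitute; the resulting algebra is identical.
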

\begin{proof}
    The proof is quite simple and revolves around substituting $x^{k+1}$ and $x^k$ into~\eqref{eq:taylor}:
    \begin{align*}
        &f(x^{k+1}) - f(x^k)\\
        &\le \<\nabla f(x^k), x^{k+1} - x^k> + \frac{1}{2}\<\nabla^2 f(x^k)(x^{k+1} - x^k), x^{k+1} - x^k> + \frac{H}{3}\|x^{k+1} - x^k\|^3 \\
        &= \<\nabla f(x^k) + \nabla^2 f(x^{k})(x^{k+1}-x^k), x^{k+1} - x^k> - \frac{1}{2}\underbrace{\<\nabla^2 f(x^k)(x^{k+1} - x^k), x^{k+1} - x^k>}_{\ge 0} + \frac{H}{3}r_k^3 \\
        &\le \<\nabla f(x^k) + \nabla^2 f(x^{k})(x^{k+1}-x^k), x^{k+1} - x^k> + \frac{H}{3}r_k^3 \\
        &\overset{\eqref{eq:lm_identity}}{=} \<-\lambda_k (x^{k+1}-x^k), x^{k+1} - x^k> + \frac{H}{3}r_k^3 \\
        &= \left(\frac{H}{3}r_k - \lambda_k \right)r_k^2 \\
        &\overset{\eqref{eq:hrk_lambdak}}{\le} -\frac{2}{3}\lambda_k r_k^2.
    \end{align*}
\end{proof}
Note that a straightforward corollary of \Cref{lem:descent} is that
\begin{equation}
    f(x^{k+1}) \le f(x^k) \quad \textrm{for any }k. \label{eq:f_monotone}
\end{equation}

So far, we have established that \Cref{alg:global_newton} decreases the values of $f$ but we do not know yet its rate of convergence. To obtain a rate, we need the following assumption, which is standard in the literature on cubic Newton~\cite{nesterov2006cubic}.
\begin{assumption}\label{as:distance}
    The objective function $f$ has a finite optimum $x^*$ such that $f(x^*)=\min_{x\in\R^d} f(x)$. Moreover, the diameter of the sublevel set $\{x: f(x)\le f(x^0)\}$ is bounded by some constant $D>0$, which means that for any $x$ satisfying $f(x)\le f(x^0)$ we have $\|x-x^*\|\le D$.
\end{assumption}
The assumption above is quite general. For example, it holds for any strongly convex or uniformly convex $f$. In fact, the assumption is satisfied if the function gap $f(x) - f^*$ is lower-bounded by \emph{any} power function. Indeed, if there exists $\alpha>0$ such that $f(x)-f^*=\Omega(\|x\|^\alpha)$ for any $x\in\R^d$, then it immediately implies that $\|x-x^*\|\le \|x\|+\|x^*\| = \cO\left(\|x^*\|+ (f(x)-f^*)^{\frac{1}{\alpha}} \right)\le \mathrm{const}$.

Equipped with the right assumption, we are ready to show the $\cO\left(\frac{1}{k^2}\right)$ convergence rate of our algorithm on convex problems with Lipschitz Hessians. Notice that the rate is the same as that of cubic Newton and does not require extra assumptions despite not solving a difficult subproblem.
\begin{theorem}\label{th:main}
    Let $f$ be convex and Assumptions~\ref{as:hessian_smooth} and \ref{as:distance} be satisfied. If we choose $\lambda_k=\sqrt{H\|\nabla f(x^k)\|}$, then it holds
    \begin{equation*}
        f(x^k) - f^*
        = \cO\left(\frac{1}{k^2}\right).
    \end{equation*}
\end{theorem}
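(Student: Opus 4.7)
The plan is to exploit the dichotomy hinted at by the summary table and classify each iteration as \emph{steady} ($k \in \cI_\infty = \{i : \|\nabla f(x^{i+1})\| \ge \tfrac{1}{4}\|\nabla f(x^i)\|\}$) or \emph{sharp} (its complement). On steady iterations I will derive a Nesterov--Polyak-style recursion on the function gap $\delta_k \eqdef f(x^k) - f^*$; on sharp ones I will exploit the no-blow-up bound of \Cref{lem:new_grad_bound} to show that an abundance of them forces $\|\nabla f(x^K)\|$, and hence $\delta_K$ itself, to be exponentially small. Combining the two regimes will deliver the desired rate.

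Writing $g_k \eqdef \|\nabla f(x^k)\|$, I first combine $\delta_k - \delta_{k+1} \ge \tfrac{2}{3}\lambda_k r_k^2$ from \Cref{lem:descent} with $g_{k+1} \le 2\lambda_k r_k$ from \Cref{lem:new_grad_bound}, and substitute $\lambda_k = \sqrt{H g_k}$, to obtain the per-step inequality $\delta_k - \delta_{k+1} \ge g_{k+1}^2/(6\sqrt{H g_k})$. On a steady iteration, $g_k \le 4 g_{k+1}$ converts this into $\delta_k - \delta_{k+1} \ge g_{k+1}^{3/2}/(12\sqrt{H})$, and convexity together with \Cref{as:distance} gives $\delta_{k+1} \le g_{k+1}\, D$, so the steady iterations satisfy the Nesterov--Polyak recursion
\begin{equation*}
\delta_k - \delta_{k+1} \ge c\, \delta_{k+1}^{3/2}, \qquad c = \frac{1}{12\sqrt{H}\, D^{3/2}}.
\end{equation*}
Since $\{\delta_k\}$ is non-increasing at every step by \eqref{eq:f_monotone}, this inequality chains through the intermixed sharp iterations unchanged, and the standard integration-of-$t^{-3/2}$ argument from~\cite{nesterov2006cubic}, applied to the subsequence of steady iterations, yields $\delta_K \le C/m^2$, where $m$ is the number of steady iterations in $\{0,\dots,K-1\}$ and $C$ depends only on $c$ and $\delta_0$.

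To rule out the case in which $m$ is small, I telescope the one-step gradient bounds: on sharp iterations $g_{k+1} < g_k/4$ and on every iteration $g_{k+1} \le 2 g_k$, giving $g_K < g_0 \cdot 2^{m} \cdot (1/4)^{s} = g_0 \cdot 2^{K-3s}$ with $s = K-m$. Convexity furnishes $\delta_K \le D g_K$, so whenever $s$ is large $\delta_K$ is exponentially small. A clean split on $m \ge K/2$ versus $m < K/2$ then closes the argument: in the first regime the steady recursion already gives $\delta_K \le 4C/K^2$, and in the second, $s > K/2$ forces $\delta_K \le D g_0 \cdot 2^{-K/2}$, which is $o(1/K^2)$. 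Either way, $\delta_K = \cO(1/K^2)$. The step I expect to be most delicate is verifying that the Nesterov--Polyak implication survives the interleaving with sharp iterations, but monotonicity of $\{\delta_k\}$ reduces the question to a routine subsequence argument, so no new analytical tool is needed beyond those already present in the preceding lemmas and in~\cite{nesterov2006cubic}.
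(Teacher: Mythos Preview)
Your proposal is correct and follows essentially the same route as the paper's proof: the same steady/sharp dichotomy via $\cI_\infty$, the same derivation of a Nesterov--Polyak recursion on the function gap over the steady subsequence (you obtain $\delta_k - \delta_{k+1} \ge c\,\delta_{k+1}^{3/2}$ while the paper obtains $\delta_{k+1} \le \delta_k - \tau\,\delta_k^{3/2}$, a harmless variant), the same use of \eqref{eq:f_monotone} to pass through interleaved sharp iterations, and the identical $m \ge K/2$ versus $m < K/2$ case split with the telescoped gradient bound $g_K \le g_0\, 2^{m}\,4^{-s}$ in the second case. The only difference is cosmetic bookkeeping and constants.
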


\begin{proof}
    By \Cref{lem:descent} we have $f(x^{k})\le f(x^{k-1})\le\dotsb \le f(x^0)$. Therefore, by \Cref{as:distance} we have $\|x^k - x^*\|\le D$ for any $k$. Thus, by convexity of $f$
    \begin{equation}
        f(x^k) - f^*
        \le \<\nabla f(x^k), x^k-x^*>
        \le \|\nabla f(x^k)\| \|x^k-x^*\|
        \le D \|\nabla f(x^k)\|. \label{eq:grad_dominant}
    \end{equation}
    Define $\cI_{\infty}\eqdef \{i\in \mathbb{N}: \|\nabla f(x^{i+1})\|\ge \frac{1}{4}\|\nabla f(x^i)\|\}$ and $\cI_k\eqdef \{i\in\cI_{\infty} : i\le k\}$. 
    Let us consider any $k\in \cI_{\infty}$.
    Using~\eqref{eq:new_grad_bound} and the fact that $H r_k\le \lambda_k$, we get
    \begin{equation*}
        \frac{1}{4}\|\nabla f(x^k)\|
        \le \|\nabla f(x^{k+1})\|
        \overset{\eqref{eq:new_grad_bound}}{\le} 2\lambda_k r_k
        = 2\sqrt{H\|\nabla f(x^k)\|}r_k.
    \end{equation*}
    Thus, we have $r_k\ge \frac{\sqrt{\|\nabla f(x^k)\|}}{8\sqrt{H}}$.
    Furthermore, by \Cref{lem:descent} we get
    \begin{align*}
        f(x^{k+1}) - f(x^k)
        &\overset{\eqref{eq:descent}}{\le} - \frac{2}{3}\lambda_k r_k^2
        \le - \frac{2}{3}\sqrt{H\|\nabla f(x^k)\|} \frac{\|\nabla f(x^k)\|}{64H}
        \overset{\eqref{eq:grad_dominant}}{\le} -  \frac{1}{96D^{3/2}\sqrt{H}}(f(x^k)-f^*)^{3/2}\\
        &= -\tau (f(x^k)-f^*)^{3/2},
    \end{align*}
    where $\tau\eqdef \frac{1}{96D^{3/2}\sqrt{H}}$. If this recursion was true for every $k$, we would get the desired $\cO\bigl(\frac{1}{k^2}\bigr)$ rate from it using the same techniques as in the convergence proof for cubic Newton~\cite{nesterov2006cubic}. In reality, it only holds for $k\in\cI_{\infty}$. To circumvent this, we are going to work with a subsequence of iterates. Let us enumerate the index set $\cI_{\infty}$ as $\cI_{\infty}=\{i_t\}_{t=0}^{\infty}$ with $i_0<i_1<\dotsb$. Defining $\alpha_t \eqdef \tau^2(f(x^{i_t}) - f^*)\ge 0$ and using $i_{t+1}\ge i_t+1$, we can rewrite the produced bound as
    \begin{equation*}
        \alpha_{t+1}
        =\tau^2(f(x^{i_{t+1}})-f^*)
        \overset{\eqref{eq:f_monotone}}{\le} \tau^2(f(x^{i_{t}+1})-f^*)
        \le \tau^2(f(x^{i_{t}})-f^*) - \tau^3(f(x^{i_{t}})-f^*)^{\frac{3}{2}}
        = \alpha_t - \alpha_t^{\frac{3}{2}}.
    \end{equation*}
    The remainder of the proof is rather simple. 
    By \Cref{pr:sequence}, the obtained recursion on $\alpha_t$ implies convergence $\alpha_t=\cO(\frac{1}{t^2})$. Since $\alpha_t=\tau^2(f(x^{i_t}) - f^*)$ is based on the subsequence of indices $i_0, i_1,\dotsc$ from $\cI_{\infty}$, we need to consider two cases. If there are many ``good'' iterates, i.e., the set $\cI_k$ is large, then we will immediately obtain a convergence guarantee for $f(x^k) - f^*$ from the convergence of the sequence $\alpha_t$. If, on the other hand, the number of such iterates is small, we will show that the rate would be exponential, which is even faster than $\cO(\frac{1}{k^2})$.
    
    Consider first the case $|\cI_k|\ge \frac{k}{2}$. Let $i = i_{|\cI_k|}\le k$ be the largest element from $\cI_k$. Then, it holds $f(x^k) - f^*\overset{\eqref{eq:f_monotone}}{\le} f(x^i)-f^* = \cO(\frac{1}{|\cI_k|^2})$. Since we assume $|\cI_k|\ge \frac{k}{2}$, the latter also implies that $f(x^k) - f^*=\cO(\frac{1}{k^2})$.
    
    In the second case, we assume that $|\cI_k|\le \frac{k}{2}$. By \Cref{lem:new_grad_bound}, we always have $\|\nabla f(x^{i+1})\|\le 2\|\nabla f(x^i)\|$, and for $i\not\in \cI_k$ we have $\|\nabla f(x^{i+1})\|\le \frac{1}{4}\|\nabla f(x^i)\|$. Therefore, if $|\mathcal{I}_k|\le \frac{k}{2}$, we have $\frac{f(x^k)-f^*}{D}\overset{\eqref{eq:grad_dominant}}{\le} \|\nabla f(x^k)\|\le \frac{1}{4^{k/2}}2^{k/2}\|\nabla f(x^0)\|=\frac{\|\nabla f(x^0)\|}{2^{k/2}} =\mathcal{O}(\frac{1}{k^2})$. As we can see, in the case $|\cI_k|\le \frac{k}{2}$, the rate of convergence is exponential.
\end{proof}
Theorem~\ref{th:main} provides the $\cO(1/k^2)$ global rate of convergence for Algorithm~\ref{alg:global_newton}. While this matches the rate of cubic Newton, it is natural to ask if one can prove an even faster convergence. It turns out that the proved rate is tight up to absolute-constant factors, as shown with numerical experiments for cubic Newton in~\cite{doikov2020inexact} and for Regularized Newton in a follow-up work~\cite{doikov2022super}. The specific example that yields the worst-case behaviour is $f(x)=\frac{1}{3}\|Ax - b\|_3^3$ with a tridiagonal matrix $A$, as detailed in Section 3 of \cite{arjevani2019oracle} or Example 6 of \cite{doikov2020inexact}.

\subsection{Local superlinear convergence} 
Now we present our convergence result for strongly convex functions that shows superlinear convergence when the iterates are in a neighborhood of the solution.
\begin{theorem}[Local]\label{th:local}
    Assume that $f$ is $\mu$-strongly convex, i.e., for any $x$ we have $\nabla^2 f(x)\succcurlyeq \mu\mI$. If for some $k_0\ge 0$ it holds $\|\nabla f(x^{k_0})\|\le \frac{\mu^2}{4H}$, then for all $k\ge k_0$, the iterates of \Cref{alg:global_newton} satisfy 
    \[
        \|\nabla f(x^{k+1})\|
        \le \frac{2\sqrt{H}}{\mu}\|\nabla f(x^k)\|^{\frac{3}{2}}
    \]
    and, therefore, sequence $\{x^k\}_{k\ge k_0}$ converges superlinearly.
\end{theorem}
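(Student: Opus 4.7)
The plan is to combine the general gradient-decrease estimate from \Cref{lem:new_grad_bound} with a fresh bound on the step length $r_k$ that becomes available under strong convexity. I would first observe that, since $\nabla^2 f(x^k)\succcurlyeq\mu\mI$, the regularized Hessian satisfies $\nabla^2 f(x^k)+\lambda_k\mI\succcurlyeq(\mu+\lambda_k)\mI$, so applying the update rule~\eqref{eq:reg_newton_iteration} gives
\[
r_k=\|(\nabla^2 f(x^k)+\lambda_k\mI)^{-1}\nabla f(x^k)\|\le\frac{\|\nabla f(x^k)\|}{\mu+\lambda_k}\le\frac{\|\nabla f(x^k)\|}{\mu}.
\]
This is the key improvement over the convex case: strong convexity lets me trade the factor $1/\lambda_k$ in the step-length bound for $1/\mu$, which is what enables the $3/2$ exponent on the right-hand side.

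Next I would plug this into the estimate $\|\nabla f(x^{k+1})\|\le 2\lambda_k r_k$ from \Cref{lem:new_grad_bound}, using $\lambda_k=\sqrt{H\|\nabla f(x^k)\|}$, to obtain
\[
\|\nabla f(x^{k+1})\|\le 2\lambda_k r_k\le\frac{2\sqrt{H\|\nabla f(x^k)\|}}{\mu}\|\nabla f(x^k)\|=\frac{2\sqrt{H}}{\mu}\|\nabla f(x^k)\|^{3/2},
\]
which is exactly the desired inequality at iteration $k$.

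The only thing that still needs justification is that this one-step bound can be applied inductively for every $k\ge k_0$, i.e., that the smallness condition $\|\nabla f(x^k)\|\le\frac{\mu^2}{4H}$ is preserved from one iteration to the next. Given the one-step bound, the preservation is a short calculation:
\[
\|\nabla f(x^{k+1})\|\le\frac{2\sqrt{H}}{\mu}\|\nabla f(x^k)\|\cdot\|\nabla f(x^k)\|^{1/2}\le\frac{2\sqrt{H}}{\mu}\cdot\frac{\mu}{2\sqrt{H}}\|\nabla f(x^k)\|=\|\nabla f(x^k)\|\le\frac{\mu^2}{4H}.
\]
So an easy induction on $k\ge k_0$ suffices, with the base case $k=k_0$ being the hypothesis. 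Superlinear convergence of $\{x^k\}$ then follows from the cubic-in-gradient recursion by a standard argument: setting $h_k\eqdef\frac{2\sqrt{H}}{\mu}\|\nabla f(x^k)\|^{1/2}$, the inequality becomes $h_{k+1}\le h_k^{3/2}$ with $h_{k_0}\le 1$, driving $h_k$ (hence $\|\nabla f(x^k)\|$, and hence $\|x^k-x^*\|$ by strong convexity) to zero superlinearly.

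Honestly, I do not expect a genuine obstacle here; the proof is essentially one calculation once the strong-convexity bound on $r_k$ is in hand. The only thing to be a bit careful about is not to confuse the two different regimes for bounding $r_k$: in the convex case, \Cref{lem:new_grad_bound} used $\nabla^2 f(x^k)\succcurlyeq 0$ together with the regularization $\lambda_k$ to bound $r_k\le\|\nabla f(x^k)\|/\lambda_k$, which gave a weak pointwise bound but was enough for global convergence; here we instead lean on $\mu$ to extract the extra $\|\nabla f(x^k)\|^{1/2}$ factor that produces the superlinear rate, and the regularization $\lambda_k$ only enters through the $2\lambda_k r_k$ factor in \Cref{lem:new_grad_bound}.
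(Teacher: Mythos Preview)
Your proposal is correct and follows essentially the same approach as the paper: bound $r_k\le\|\nabla f(x^k)\|/\mu$ via strong convexity, feed this into \Cref{lem:new_grad_bound}, and close with an induction on the smallness condition. Your version is in fact slightly cleaner than the paper's, since you invoke the final bound $\|\nabla f(x^{k+1})\|\le 2\lambda_k r_k$ directly, whereas the paper uses the intermediate estimate $\|\nabla f(x^{k+1})\|\le Hr_k^2+\lambda_k r_k$ and then needs the smallness hypothesis a second time to collapse the extra $Hr_k^2$ term.
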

To understand why the convergence rate is superlinear, it is helpful to look at one-step improvement implied by \Cref{th:local}:
\[
    \frac{\|\nabla f(x^{k+1})\|}{\|\nabla f(x^{k})\|}
    \le \frac{2\sqrt{H}}{\mu}\|\nabla f(x^k)\|^{\frac{1}{2}} < 1,
\]
where the second inequality follows by the assumption on small initial gradient. As gradient norms get smaller, the one-step improvement gets better. \Cref{th:local} also guarantees that for any $\varepsilon$ to achieve $\|\nabla f(x^k)\|\le \varepsilon$, it is enough to run \Cref{alg:global_newton} for $k=\cO\left(\log\log\frac{1}{\varepsilon} \right)$ iterations.

\subsection{Line search algorithm}\label{sec:line_search}
\begin{algorithm}[t]
\caption{Adaptive Newton (\textbf{AdaN})}
\label{alg:line_search}
\begin{algorithmic}[1]
    \State \textbf{Input:} $x^0 \in \R^d$, $H_0>0$
    \For{$k = 0,1,\dots$}
        \State Initialize line search with reduced regularization $H_k = \frac{H_{k-1}}{4}$, $n_k = 0$ \Comment{Start with $H_0$ if $k=0$}
        \Repeat
            \State Set $H_k \leftarrow 2H_k$ \Comment{Increase regularization}
            \State Set $n_k\leftarrow n_k + 1$
            \State $\lambda_k = \sqrt{H_k\|\nabla f(x^k)\|}$
            \State $x^+ = x^k - (\nabla^2 f(x^k) + \lambda_k \mI)^{-1}\nabla f(x^k)$ \Comment{New trial point}
            \State $r_+ = \|x^+ - x^k\|$
        \Until{$\|\nabla f(x^+)\|\le 2\lambda_k r_+$ \textbf{and} $f(x^+)\le f(x^k) - \frac{2}{3}\lambda_k r_+^2$}
 		\State $x^{k+1}=x^+$ \Comment{Accept point}
 	\EndFor
\end{algorithmic}	
\end{algorithm}
Now, let us present \Cref{alg:line_search}, which is a line search version of \Cref{alg:global_newton}. At iteration $k$, this method tries to estimate $H$ with a small constant $H_k$, and if it is too small, it increases $H_k$ in an exponential fashion until $H_k$ is large enough. Then, it computes $x^{k+1}$ and moves on to the next global iteration.

To quantify the amount of work that our line search procedure needs, we should compare its run-time to that of \Cref{alg:global_newton}. To simplify the comparison, it is reasonable to assume that each iteration $x^+ = x - (\nabla^2 f(x) + \lambda\mI)^{-1}\nabla f(x)$ takes approximately the same amount of time for every $x\in \R^d$ and $\lambda > 0$. Let us call such iteration a \emph{Newton step}. In \cite{nesterov2006cubic}, the authors showed that cubic Newton can be equipped with a line search so that on average it requires solving roughly two cubic Newton subproblems. Our \Cref{alg:line_search} borrows from the same ideas, but instead requires solving roughly two linear systems instead of cubic subproblems.

Since each iteration of \Cref{alg:global_newton} requires exactly one Newton step, its run-time for $k$ iterations is $k$ Newton steps. The following theorem measures the number of Newton steps required by \Cref{alg:line_search}.
\begin{theorem}\label{th:ls}
    Let $n_k$ denote the number of inner iterations in the line search loop at global iteration $k$, and $N_k=n_0+\dotsb + n_k$ be the total number of computed Newton steps in \Cref{alg:line_search} after $k$ global iterations. It holds
    \[
        N_k
        \le 2(k+1) + \max\left(0, \log_2 \frac{2H}{H_0} \right).
    \]
    Therefore, since $\cO(\cdot)$ ignores non-asymptotic terms, we have for the iterates of \Cref{alg:line_search}
    \[
        f(x^k) - f^*
        = \cO\left(\frac{1}{k^2}\right) = \cO\left(\frac{1}{N_k^2}\right).
    \]
\end{theorem}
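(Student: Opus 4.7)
The plan is to bound $N_k$ by tracking how the adaptive estimate $H_k$ evolves across outer iterations. First, I would verify that the inner loop always terminates once the current trial value satisfies $H_k \ge H$. Inspecting the proofs of \Cref{lem:new_grad_bound} and \Cref{lem:descent}, one sees that both only rely on the inequality $\lambda_k \ge \sqrt{H\|\nabla f(x^k)\|}$, which is equivalent to $H_k \ge H$ under the choice $\lambda_k = \sqrt{H_k\|\nabla f(x^k)\|}$. Consequently, whenever the current $H_k$ reaches or exceeds $H$, both stopping conditions $\|\nabla f(x^+)\| \le 2\lambda_k r_+$ and $f(x^+) \le f(x^k) - \tfrac{2}{3}\lambda_k r_+^2$ hold automatically. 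Denoting by $H_k^{\mathrm{acc}}$ the accepted value at outer iteration $k$ and by $H_k^{\mathrm{init}}$ its initial value in the loop, this gives the crucial bound $H_k^{\mathrm{acc}} \le 2\max(H_k^{\mathrm{init}}, H)$: either $H_k^{\mathrm{init}} \ge H$ (and a single doubling suffices), or we stop at the first power of two that crosses $H$.

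The second step is a clean telescoping identity. Since $H_k^{\mathrm{acc}} = 2^{n_k}\, H_k^{\mathrm{init}}$, with $H_0^{\mathrm{init}} = H_0$ and $H_k^{\mathrm{init}} = H_{k-1}^{\mathrm{acc}}/4$ for $k \ge 1$, taking logarithms and summing over $j=0,\dots,k$ gives
\[
    N_k \;=\; \sum_{j=0}^{k} \log_2\!\frac{H_j^{\mathrm{acc}}}{H_j^{\mathrm{init}}} \;=\; 2k + \log_2\!\frac{H_k^{\mathrm{acc}}}{H_0},
\]
where the $2k$ term accounts for the factor of $4$ discarded upon each re-initialization.

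Third, a short induction shows $H_k^{\mathrm{acc}} \le 2\max(H, H_0)$ for every $k\ge 0$. The base case is exactly the bound from the first step applied with $H_0^{\mathrm{init}} = H_0$. For the inductive step, $H_k^{\mathrm{init}} = H_{k-1}^{\mathrm{acc}}/4 \le \max(H, H_0)/2$, and the first step yields $H_k^{\mathrm{acc}} \le 2\max(H_k^{\mathrm{init}}, H) \le 2\max(H, H_0)$. Plugging into the telescoping identity gives
\[
    N_k \;\le\; 2k + \log_2\!\frac{2\max(H, H_0)}{H_0} \;\le\; 2(k+1) + \max\!\left(0,\, \log_2\!\frac{2H}{H_0}\right),
\]
which is the desired bound.

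Finally, for the complexity corollary I would observe that the accepted iterates satisfy precisely the two inequalities used in the convex analysis, so the proof of \Cref{th:main} transfers verbatim to \Cref{alg:line_search} and yields $f(x^k) - f^* = \cO(1/k^2)$. Combining with $N_k \le 2(k+1) + C$, which gives $k = \Omega(N_k)$ asymptotically, one recovers $f(x^k) - f^* = \cO(1/N_k^2)$. The only delicate point I anticipate is the initial step: checking that \Cref{lem:new_grad_bound} and \Cref{lem:descent} remain valid under the looser hypothesis $H_k \ge H$ rather than the exact equality $\lambda_k = \sqrt{H\|\nabla f(x^k)\|}$. This is essentially bookkeeping once one traces through those proofs, after which the telescoping and induction are routine.
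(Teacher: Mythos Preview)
Your proposal is correct and follows essentially the same route as the paper: verify that the line-search stopping conditions are exactly the inequalities driving the proof of \Cref{th:main}, then telescope $n_k = 2 + \log_2 H_k^{\mathrm{acc}} - \log_2 H_{k-1}^{\mathrm{acc}}$ and cap $H_k^{\mathrm{acc}}$ by $\max(2H,H_0)$. The paper's write-up is terser---it simply asserts $H_k\le\max(2H,H_0)$ without your explicit induction, and it singles out a third needed inequality, $\lambda_k r_k\le\|\nabla f(x^k)\|$, which holds for any $\lambda_k>0$ and which you should mention when arguing that \Cref{th:main} transfers verbatim.
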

\Cref{th:ls} states that \Cref{alg:line_search}, which does not require knowledge of the Lipschitz constant $H$, runs at about half the speed of \Cref{alg:global_newton} in terms of full number of Newton steps. The extra logarithmic term is likely to be small if we take some $y^0\in\R^d$ as some perturbation of $x^0$ and initialize
\[
    H_0 = \frac{\|\nabla f(y^0) - \nabla f(x^0) - \nabla^2 f(x^0)(y^0- x^0)\|}{\|y^0 - x^0\|^2}.
\]
Since we need to compute $\nabla^2 f(x^0)$ to perform the first step of \Cref{alg:line_search} anyway, the initialization above should be sufficiently cheap to compute. It is immediate to observe that by definition of $H$, the estimate above satisfies $H_0\le H$. Since the proof of \Cref{th:ls} mostly follows the lines of the proof of Lemma~3 in \cite{nesterov2013gradient}, we defer it to the appendix.

\textbf{Adaptivity.} Notice that every global iteration of \Cref{alg:line_search} includes division of our current estimate $H_{k-1}$ by a factor of $4$. Since inside the line search we immediately multiply by 2, this means after a single line search iteration, $H_k$ is equal to $\frac{H_{k-1}}{2}$. If the first iteration of line search turns out to be successful, $H_{k}$ remains twice smaller than $H_{k-1}$, so the algorithm may have a decreasing sequence of estimates. This allows it to adapt to the \emph{local} values of smoothness constant, which might be arbitrarily smaller than the global one.

\subsection{Theory for non-linear least squares}
In this section, we turn our attention to the least-squares problem,
\[
    \min_x \phi(x)\eqdef \frac{1}{2}\|F(x)\|^2,
\]
where $F$ is a smooth operator. The problem is called least squares because it is often used with operator $F(x)=\hat F(x) - y$, where $y$ is a fixed vector of target values. The goal, thus, is to minimize the residuals of approximating $y$. We present the Levenberg--Marquardt algorithm with our penalty in \Cref{alg:lm}. The method is often motivated by the fact that it solves a quadratically-regularized subproblem:
\[
    x^{k+1} = \argmin_x \left\{\|F(x^k)+\mJ_k(x - x^k)\|^2 + \lambda_k\|x-x^k\|^2 \right\}.
\]
In particular, if for some sequence $\{\lambda_k\}_k$ the right-hand side is always larger than $\|F(x)\|^2$, then it would always hold $\|F(x^{k+1})\|\le \|F(x^k)\|$. However, for our theory, we will instead assume a \emph{cubic} upper bound.
\begin{algorithm}[t]
\caption{Globally-convergent Levenberg--Marquardt Algorithm for problem~\eqref{eq:least_squares}}
\label{alg:lm}
\begin{algorithmic}[1]
    \State \textbf{Input:} $x^0 \in \R^d$, $c>0$
    \For{$k = 0,1,\dots$}
        \State $\mJ_k = \partial F(x^k)$ \Comment{Compute the Jacobian of $F$ at point $x^k$}
 		\State $\lambda_k = \sqrt{c\|\mJ_k^\top F(x^k)\|}$
 		\State $x^{k+1}=x^k - (\mJ_k^\top \mJ_k +\lambda_k \mI)^{-1}\mJ_k^\top F(x^k)$ \Comment{Compute $x^{k+1}$ by solving a linear system}
 	\EndFor
\end{algorithmic}	
\end{algorithm}

To study the convergence of \Cref{alg:lm}, let us first state the assumptions on $F$ and some basic notation. As before we denote by $r_k = \|x^{k+1}-x^k\|$ and we use $\partial F(x)$ to denote the Jacobian matrix of $F$.
\begin{assumption}\label{as:ls_smooth}
    We assume that $F$ is a smooth operator such that for some constants $J, H, c\ge 0$ and for any $x, y\in \R^d$ it holds $\|\partial F(x)\|\le J$ and
    \begin{align}
        &\|F(y) - F(x) - \partial F(x)(y-x)\|
        \le H\|x-y\|^2, \label{eq:ls_smooth} \\
        &\|F(y)\|^2
        \le \|F(x) + \partial F(x) (y-x)\|^2 + c\|y-x\|^3. \label{eq:ls_cubic}
    \end{align}
\end{assumption}
To the best of our knowledge, assumption in equation~\eqref{eq:ls_cubic} has not been studied in the prior literature. We resort to it for the simple reason that it is the most likely generalization of \Cref{as:hessian_smooth} to the problem of least squares. We also note that an assumption similar to the cubic growth of squared norm in~\eqref{eq:ls_cubic} has appeared in the work~\cite{nesterov2007modified}, where a quadratic upper bound was used for non-squared norm. However, having our cubic assumption is more conservative when $y$ and $x$ are far from each other, so it makes more sense for studying \emph{global} convergence. We also note that \Cref{as:ls_smooth} seems more restrictive than \Cref{as:hessian_smooth}, but this is expected since we do not assume any type of convexity for objective~\eqref{eq:least_squares}. 

\begin{lemma}
    It holds
    \begin{equation}
        \lambda_k (x^{k+1}-x^k)=-\mJ_k^\top (F(x^k) + \mJ_k (x^{k+1}-x^k)).\label{eq:ls_identity}
    \end{equation}
\end{lemma}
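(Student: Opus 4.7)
The plan is to mimic the proof of \Cref{lem:update_direction} verbatim, since the stated identity is exactly the least-squares analogue of the one shown there. Concretely, I would start from the closed-form update rule of \Cref{alg:lm},
\[
    x^{k+1} = x^k - (\mJ_k^\top \mJ_k + \lambda_k \mI)^{-1}\mJ_k^\top F(x^k),
\]
and multiply both sides by $(\mJ_k^\top \mJ_k + \lambda_k \mI)$ on the left, so that the inverse drops out. This gives
\[
    (\mJ_k^\top \mJ_k + \lambda_k \mI)(x^{k+1}-x^k) = -\mJ_k^\top F(x^k),
\]
i.e.\ the normal equations of the quadratic subproblem implicitly defining $x^{k+1}$. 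Splitting the left-hand side into its two summands and isolating the $\lambda_k (x^{k+1}-x^k)$ term yields exactly \eqref{eq:ls_identity}.

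There is essentially no obstacle here: the computation is one line of linear algebra, and the only place where care is needed is justifying that $\mJ_k^\top \mJ_k + \lambda_k \mI$ is invertible, which is immediate because $\mJ_k^\top \mJ_k \succcurlyeq 0$ and $\lambda_k > 0$ whenever $\mJ_k^\top F(x^k) \ne 0$ (and when $\lambda_k = 0$ the identity holds trivially with $x^{k+1}=x^k$). It may also be worth noting, in analogy with the remark after \Cref{lem:update_direction}, that \eqref{eq:ls_identity} can be rewritten as $x^{k+1} = x^k - \frac{1}{\lambda_k}\mJ_k^\top(F(x^k) + \mJ_k(x^{k+1}-x^k))$, so $1/\lambda_k$ again plays the role of a stepsize applied to a quantity that approximates $\mJ_k^\top F(x^{k+1})$ (via $F(x^k) + \mJ_k(x^{k+1}-x^k) \approx F(x^{k+1})$ under \Cref{as:ls_smooth}); this will be the key structural fact for the subsequent convergence analysis, exactly as in the minimization case.
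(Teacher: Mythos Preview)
Your proposal is correct and follows exactly the same approach as the paper: multiply the update rule by $(\mJ_k^\top \mJ_k + \lambda_k \mI)$ to obtain the normal equations $(\mJ_k^\top \mJ_k + \lambda_k\mI)(x^{k+1}-x^k) = -\mJ_k^\top F(x^k)$, then rearrange. The additional remarks on invertibility and the stepsize interpretation are fine but not needed for the lemma itself.
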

\begin{proof}
    Multiplying both sides of the update formula~\eqref{eq:ls_update} by $(\mJ_k^\top \mJ_k + \lambda_k \mI)$, we derive
    \begin{equation*}
        (\mJ_k^\top \mJ_k + \lambda_k\mI)(x^{k+1}-x^k)
        = -\mJ_k^\top F(x^k),
    \end{equation*}
    which is easy to rearrange into our claim.
\end{proof}
\begin{lemma}\label{lem:ls_main}
    If \Cref{as:ls_smooth} is satisfied and $\lambda_k = \sqrt{c\|\mJ_k^\top F(x^k)\|}$, then
    \begin{align}
        c r_k &\le \lambda_k, \label{eq:r_to_c_lambda}\\
        \|\mJ_{k+1}^\top F(x^{k+1})\|
        &\le JHr_k^2 + \lambda_k r_k + Hr_k\|F(x^{k+1})\|. \label{eq:lm_two_grads}
    \end{align}
\end{lemma}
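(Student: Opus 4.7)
The plan is to prove the two inequalities in sequence, following the same spirit as \Cref{lem:new_grad_bound} from the convex case, with $\mJ_k^\top F(x^k)$ and $\mJ_k^\top\mJ_k$ playing the roles of the gradient and the Hessian.

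For~\eqref{eq:r_to_c_lambda}, first I would start from the update rule~\eqref{eq:ls_update}. Since $\mJ_k^\top\mJ_k\succcurlyeq 0$, the spectrum of $\mJ_k^\top\mJ_k + \lambda_k\mI$ is contained in $[\lambda_k,\infty)$, hence $\|(\mJ_k^\top\mJ_k + \lambda_k\mI)^{-1}\|\le 1/\lambda_k$. This yields $r_k \le \|\mJ_k^\top F(x^k)\|/\lambda_k$, and substituting $\|\mJ_k^\top F(x^k)\| = \lambda_k^2/c$ (which is immediate from the definition of $\lambda_k$) gives $cr_k\le\lambda_k$.

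For~\eqref{eq:lm_two_grads}, the centerpiece of my plan is the decomposition
\begin{equation*}
    \mJ_{k+1}^\top F(x^{k+1})
    = -\lambda_k(x^{k+1}-x^k) + (\mJ_{k+1}-\mJ_k)^\top F(x^{k+1}) + \mJ_k^\top e_k,
\end{equation*}
where $e_k \eqdef F(x^{k+1}) - F(x^k) - \mJ_k(x^{k+1}-x^k)$. To derive this, I would add and subtract $\mJ_k^\top(F(x^k) + \mJ_k(x^{k+1}-x^k))$ inside $\mJ_{k+1}^\top F(x^{k+1})$ and invoke identity~\eqref{eq:ls_identity} to replace this quantity by $-\lambda_k(x^{k+1}-x^k)$. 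The triangle inequality, combined with $\|e_k\|\le Hr_k^2$ from~\eqref{eq:ls_smooth}, the bound $\|\mJ_k\|\le J$ from \Cref{as:ls_smooth}, and the Lipschitz Jacobian property $\|\mJ_{k+1}-\mJ_k\|\le Hr_k$ (inherited from $F$ having Lipschitz Jacobian in the problem setup for~\eqref{eq:least_squares}), then produces the three-term bound $\lambda_k r_k + Hr_k\|F(x^{k+1})\| + JHr_k^2$, matching the claim.

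The main obstacle is spotting the right decomposition. Adding and subtracting $\mJ_k^\top(F(x^k)+\mJ_k(x^{k+1}-x^k))$ accomplishes two things at once: it invokes the optimality identity~\eqref{eq:ls_identity} and cleanly isolates the Jacobian-difference term and the linearization residual $e_k$. Once this algebra is in place, the rest is routine triangle-inequality bookkeeping. A secondary subtlety is that the Jacobian-Lipschitz bound $\|\mJ_{k+1}-\mJ_k\|\le Hr_k$ is used only through its operator norm rather than through the directional bound on $(\mJ_{k+1}-\mJ_k)(x^{k+1}-x^k)$ that~\eqref{eq:ls_smooth} yields by symmetrization; this is the reason the problem setup requires the stronger Lipschitz Jacobian of $F$ rather than just~\eqref{eq:ls_smooth}.
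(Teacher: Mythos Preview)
Your proposal is correct and follows essentially the same approach as the paper: both use $\mJ_k^\top\mJ_k\succcurlyeq 0$ to get $r_k\le \|\mJ_k^\top F(x^k)\|/\lambda_k$ for~\eqref{eq:r_to_c_lambda}, and for~\eqref{eq:lm_two_grads} both split $\mJ_{k+1}^\top F(x^{k+1})$ into the Jacobian-difference term $(\mJ_{k+1}-\mJ_k)^\top F(x^{k+1})$, the linearization residual $\mJ_k^\top e_k$, and the part handled by~\eqref{eq:ls_identity}, then bound each via triangle inequality, $\|\mJ_k\|\le J$, and~\eqref{eq:ls_smooth}. Your remark that the step $\|\mJ_{k+1}-\mJ_k\|\le Hr_k$ relies on the Lipschitz-Jacobian hypothesis from the problem setup (rather than following from~\eqref{eq:ls_smooth} alone) is a valid observation that the paper's proof also tacitly uses.
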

Interestingly, the results of \Cref{lem:ls_main} are quite similar to what had in \Cref{lem:new_grad_bound}, yet \Cref{lem:new_grad_bound} required convexity of the objective. The main reason we managed to avoid such assumptions, is that the matrix $\mJ_k^\top \mJ_k$ is always positive semi-definite even if $F$ does not have any nice properties. Thanks to this property, we can establish the following theorem.
\begin{theorem}\label{th:lm}
    Under \Cref{as:ls_smooth}, the iterates of \Cref{alg:lm} satisfy
    \begin{align*}
        \|F(x^{k+1})\|\le \|F(x^k)\|\qquad \textrm{and}\qquad \min_{t\le k}\left( \|\mJ_t^\top F(x^t)\|\right) = \cO\left(\frac{\log k}{k^{2/5}} \right).
    \end{align*}
\end{theorem}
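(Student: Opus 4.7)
The plan is to turn assumption~\eqref{eq:ls_cubic} into a per-iteration descent on $\|F(x^k)\|^2$, telescope it, and then combine with the gradient-to-step bound~\eqref{eq:lm_two_grads} to extract a polynomial decay on $\min_{t\le k}\|\mJ_t^\top F(x^t)\|$. Write $g_k \eqdef \|\mJ_k^\top F(x^k)\|$ and $F_0 \eqdef \|F(x^0)\|$.

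\textbf{Descent and telescoping.} First I apply~\eqref{eq:ls_cubic} with $y = x^{k+1}$, expand $\|F(x^k) + \mJ_k(x^{k+1}-x^k)\|^2$ as $\|F(x^k)\|^2 + 2\langle \mJ_k^\top F(x^k),\, x^{k+1}-x^k\rangle + \|\mJ_k(x^{k+1}-x^k)\|^2$, and substitute the identity~\eqref{eq:ls_identity} into the cross term; this collapses to
\[
    \|F(x^{k+1})\|^2 \le \|F(x^k)\|^2 - 2\lambda_k r_k^2 - \|\mJ_k(x^{k+1}-x^k)\|^2 + c r_k^3.
\]
The cubic error is absorbed using $c r_k \le \lambda_k$ from~\eqref{eq:r_to_c_lambda}, giving $\|F(x^{k+1})\|^2 \le \|F(x^k)\|^2 - \lambda_k r_k^2$. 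This already delivers the monotonicity $\|F(x^{k+1})\|\le\|F(x^k)\|$ claimed in the theorem, and telescoping over $t = 0,\ldots,k$ yields the summable estimate $\sum_{t=0}^{k} \sqrt{c g_t}\, r_t^2 \le F_0^2$.

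\textbf{Reverse bound and rate extraction.} Next I invert~\eqref{eq:lm_two_grads} into a lower bound on $r_k$ in terms of $g_{k+1}$. The monotonicity gives the uniform bounds $\|F(x^{k+1})\|\le F_0$, $g_k \le J F_0$, and $r_k \le \sqrt{g_k/c} \le \sqrt{JF_0/c}$, so every factor on the right-hand side of~\eqref{eq:lm_two_grads} other than $r_k$ is bounded by an explicit constant depending only on $J, H, c, F_0$; rearranging produces $r_k \ge g_{k+1}/C_1$ for some $C_1 = C_1(J,H,c,F_0)$. Now assume $\min_{t\le k} g_t \ge \varepsilon$. Then for each $t \le k-1$ we simultaneously have $g_t \ge \varepsilon$ and, via $g_{t+1}\ge\varepsilon$, $r_t \ge \varepsilon/C_1$, so every summand of the telescoping bound satisfies $\sqrt{c g_t}\, r_t^2 \ge \sqrt{c}\,\varepsilon^{5/2}/C_1^2$. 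Summing gives $k\sqrt{c}\,\varepsilon^{5/2}/C_1^2 \le F_0^2$, so $\min_{t\le k} g_t = \cO(k^{-2/5})$, which in particular implies the stated $\cO(\log k/k^{2/5})$ rate.

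\textbf{Main obstacle.} The delicate point is the off-by-one index mismatch between the descent summand $\sqrt{c g_t}\,r_t^2$ (involving $g_t$) and the reverse bound $r_t \gtrsim g_{t+1}/C_1$ (involving $g_{t+1}$); this is what forces the argument to go through $\min_{t\le k}g_t$ rather than through any single iterate, since only then are $g_t$ and $g_{t+1}$ simultaneously $\ge \varepsilon$. A secondary subtlety is that the uniform bound $\|F(x^{k+1})\|\le F_0$ baked into $C_1$ is loose once iterates start to make progress; my guess is that the $\log k$ factor in the theorem comes from precisely this type of slightly loose, monotone-in-$k$ handling of the constants inside~\eqref{eq:lm_two_grads}, trading a logarithm for simpler bookkeeping rather than from any intrinsic obstruction in the rate itself.
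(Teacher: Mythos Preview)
Your proof is correct, and in fact cleaner than the paper's own argument. Both proofs establish the same two ingredients: the descent inequality $\|F(x^{k+1})\|^2 \le \|F(x^k)\|^2 - \lambda_k r_k^2$ via~\eqref{eq:ls_cubic}, \eqref{eq:ls_identity}, \eqref{eq:r_to_c_lambda}, and the reverse bound $g_{k+1} \le C_1 r_k$ via~\eqref{eq:lm_two_grads} with all coefficients frozen at their $F_0$-level values. The difference is in how these are combined. The paper introduces an index set $\cI_\infty = \{i : 5g_{i+1} \ge g_i\}$ and splits into two cases according to whether $|\cI_k| \ge k/\log(k+2)$ or not; in the first case it telescopes over $\cI_k$ to get $\min_i g_i^{5/2} = \cO(\log k / k)$, and in the second it shows exponential decay. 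That threshold is precisely where the $\log k$ enters. You sidestep the split entirely: by working directly with $\varepsilon = \min_{t\le k} g_t$, the index mismatch you flag as the ``main obstacle'' disappears, since \emph{both} $g_t \ge \varepsilon$ and $g_{t+1} \ge \varepsilon$ hold simultaneously for every $t \le k-1$, and the telescoped sum immediately forces $k\sqrt{c}\,\varepsilon^{5/2}/C_1^2 \le F_0^2$. Your final speculation is on the mark: the $\log k$ is an artifact of the paper's case-split bookkeeping, not of the problem, and your argument actually proves the sharper $\cO(k^{-2/5})$ rate.
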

The rate in \Cref{th:lm} is not particularly impressive, but we should keep in mind that it holds even in the complete absence of convexity. Furthermore, the main feature of the result is that it holds for arbitrary initialization, no matter how far it is from stationary points of the operator $F$. The analysis of \Cref{th:lm} is a bit more involved than that of \Cref{th:main}, but follows the same set of ideas, so we defer it to the appendix.

The result in Theorem~\ref{th:lm} is not the first to establish global convergence of Levenberg--Marquardt algorithm with regularization based on gradient norm. For instance, \cite{bergou2019convergence} showed, under Lipschitzness of the gradient of $\|F(x)\|^2$, a similar result for regularization $\lambda_k \propto \|F(x_k)\|^2$. Ignoring logarithmic factors, they established convergence rate $\mathcal{O}\left(\frac{1}{k^{1/2}}\right)$. Our theory, however, does not requires Lipschitzness of the gradient of $\|F(x)\|^2$ and relies instead on inequality~\eqref{eq:ls_cubic}, so the rates are not directly comparable.

\begin{algorithm}[t]
\caption{Heuristic Modification of \Cref{alg:line_search} (\textbf{AdaN+})}
\label{alg:newton_heuristic}
\begin{algorithmic}[1]
    \State \textbf{Input:} $x^0 \neq x^1 \in \R^d$
    \State Initialize $H_0 = \frac{\|\nabla f(x^1) - \nabla f(x^0) - \nabla^2 f(x^0)(x^1-x^0)\|}{\|x^{1}-x^0\|^2}$
    \For{$k = 1,2,\dots$}
        \State $M_k=\frac{\|\nabla f(x^{k}) - \nabla f(x^{k-1}) - \nabla^2 f(x^{k-1})(x^{k}-x^{k-1})\|}{\|x^{k}-x^{k-1}\|^2}$
        \State $H_{k}=\max\left\{M_k, \frac{H_{k-1}}{2}\right\}$
 		\State $\lambda_k = \sqrt{H_k\|\nabla f(x^k)\|}$
 		\State $x^{k+1}=x^k - (\nabla^2 f(x^k)+\lambda_k \mI)^{-1}\nabla f(x^k)$\Comment{Compute $x^{k+1}$ by solving a linear system}
 	\EndFor
\end{algorithmic}	
\end{algorithm}

\section{Practical considerations and experiments\protect\footnote{Our code is available on GitHub and Google Colab: \href{https://github.com/konstmish/global-newton}{https://github.com/konstmish/global-newton}\\ Colab: \href{https://colab.research.google.com/drive/1-LmO57VfJ1-AYMopMPYbkFvKBF7YNhW2?usp=sharing}{https://colab.research.google.com/drive/1-LmO57VfJ1-AYMopMPYbkFvKBF7YNhW2?usp=sharing}}}\label{sec:numerical}
Before presenting a numerical comparison of the methods that we are interested in, let us discuss some ways that can improve the performance of our method.

Newton's method is very popular in practice despite the lack of global convergence, mostly because it does not need any parameters and it is often initialized sufficiently close to the solution. Our \Cref{alg:global_newton} has the advantage of global convergence, but at the cost of requiring the knowledge of $H$. In contrast, our line search algorithm AdaN does not require parameters and it is guaranteed to converge globally, but it requires evaluation of functional values and is harder to implement. Thus, we ask: can we design an algorithm that would still use some regularization but in a simpler form than in AdaN? 

To find a practical algorithm that would be easier to use than AdaN, let us try to find a smaller regularization estimate by taking a look at \Cref{lem:new_grad_bound}. Notice that one of the ways $H$ appears in our bounds is through the error of approximating the next gradient. Motivated by this observation, we can define
\begin{equation*}
    M_{k+1}
    \eqdef \frac{\|\nabla f(x^{k+1}) - \nabla f(x^k) - \nabla^2 f(x^k)(x^{k+1}-x^k)\|}{\|x^{k+1}-x^k\|^2}.
\end{equation*}
Using $M_k$ instead of $H$ in~\Cref{alg:global_newton} is perhaps over-optimistic and in some preliminary experiments did not show a stable behaviour. However, we observed the following estimation to work better in practice:
\begin{align*}
    &H_k = \max\left\{M_k,  \frac{H_{k-1}}{2}\right\},
    &\lambda_k = \sqrt{H_k\|\nabla f(x^k)\|}.
\end{align*}
The definition of $H_k$ is motivated by the adaptive estimation of the Lipschitz constant of gradient from~\cite{malitsky20adaptive}, and it achieves two goals. On the one hand, we always have $H_k\ge M_k$, where $M_k$ is the local estimate of the Hessian smoothness. This way, we keep $H_k$ closer to the local value of the Hessian smoothness, which might be much smaller than the global value of $H$. On the other hand, since $M_k$ is only an underestimate of $H$, i.e., $M_k\le H$, we compensate for the potentially over-optimistic value of $M_k$ by using the second condition, $H_k\ge \frac{H_{k-1}}{2}$. All details of the proposed scheme, which we call AdaN+, are given in~\Cref{alg:newton_heuristic}.

In case of the non-linear least-squares problem, we can similarly estimate $H_k$ by defining
\begin{equation*}
    M_{k+1}
    =\frac{\|F(x^{k+1}) - F(x^k) - \mJ_k(x^{k+1}-x^k)\|}{\|x^{k+1}-x^k\|^2}.
\end{equation*}
The heuristic $H_k = \max\left\{M_k,  \frac{H_{k-1}}{2}\right\}$ is not directly supported by our theory, but the resulting method shall be still more robust than the regularization-free method.
\begin{figure}
    \centering
    \includegraphics[scale=0.21]{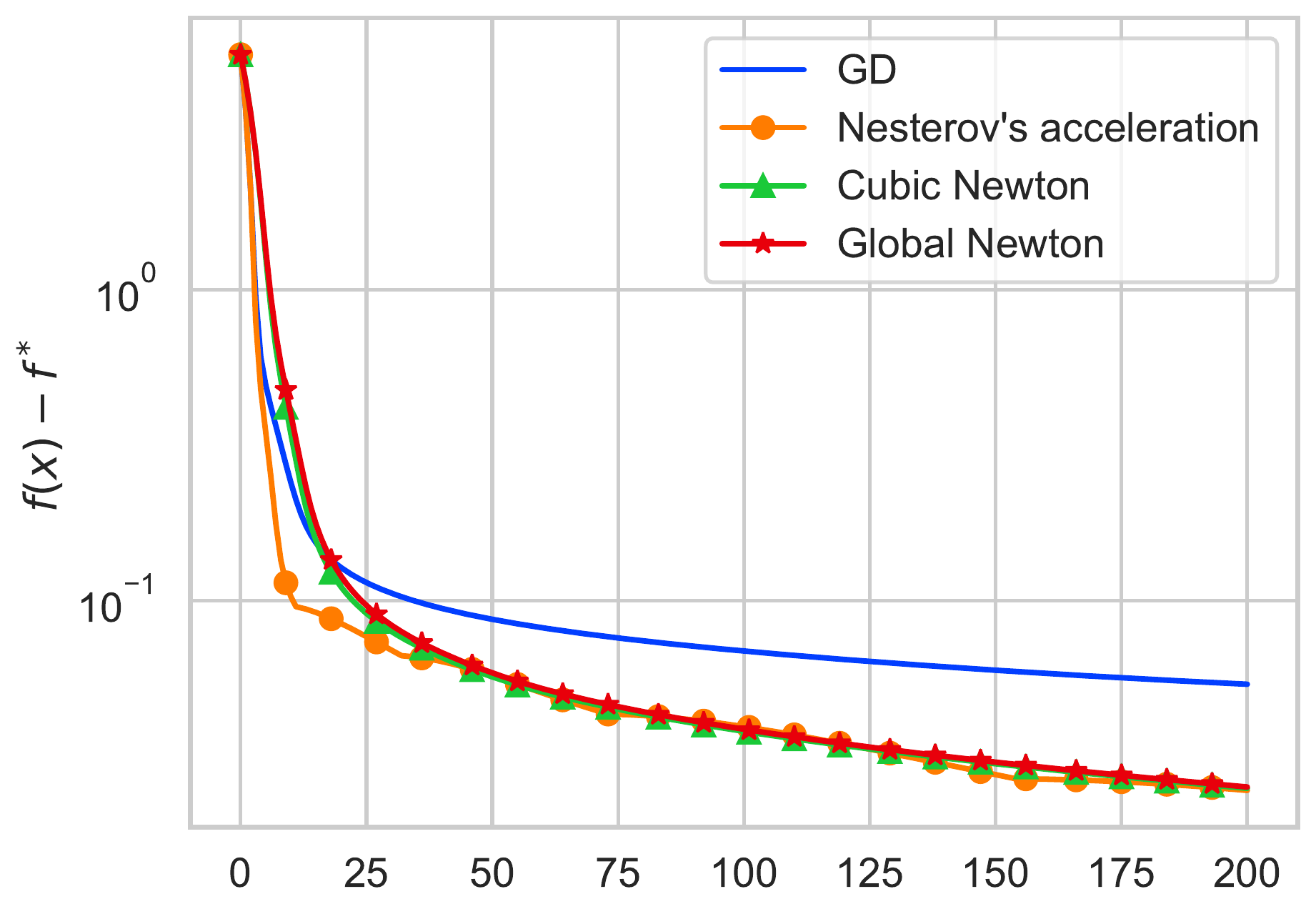}
    \includegraphics[scale=0.21]{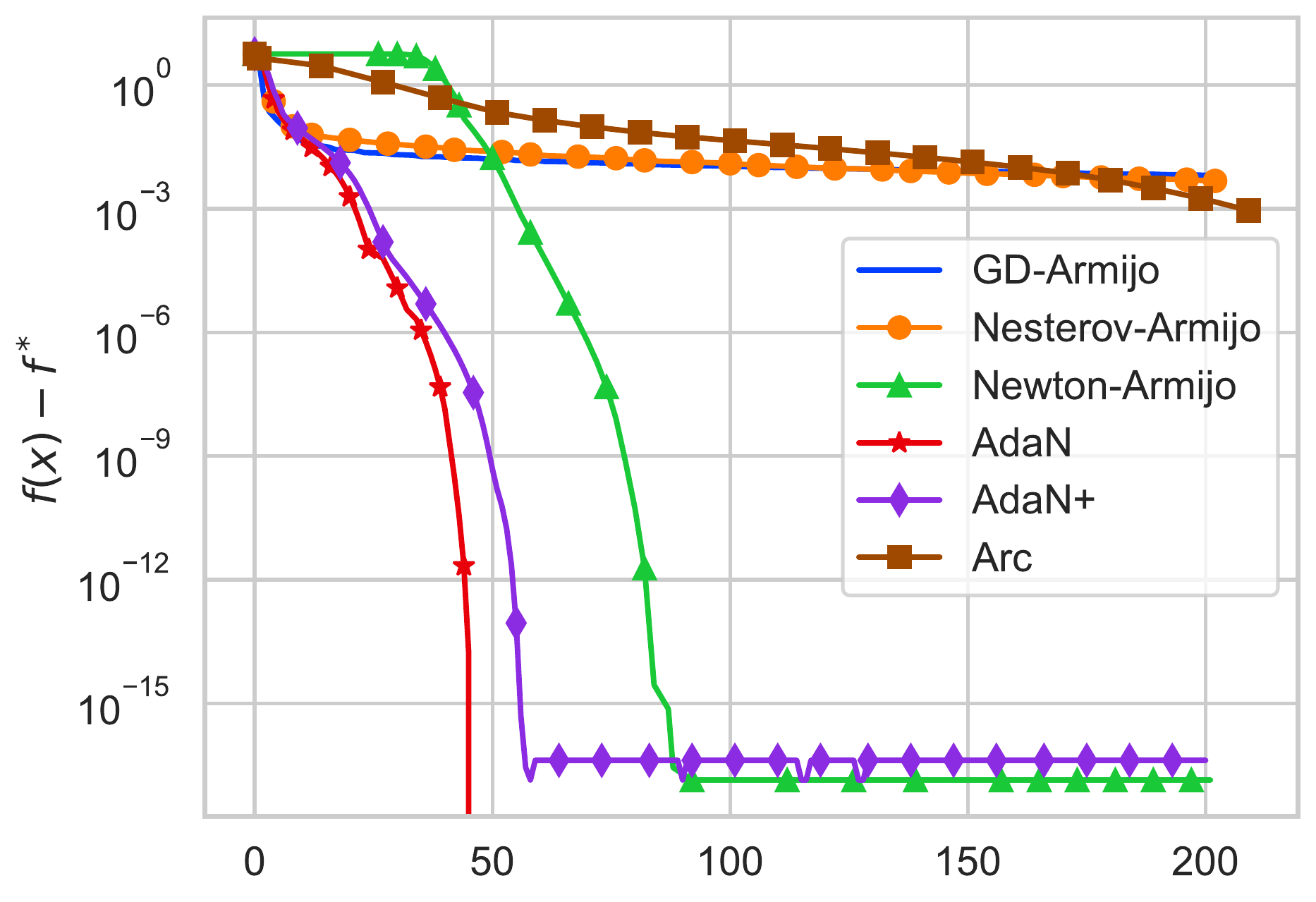}
    \includegraphics[scale=0.21]{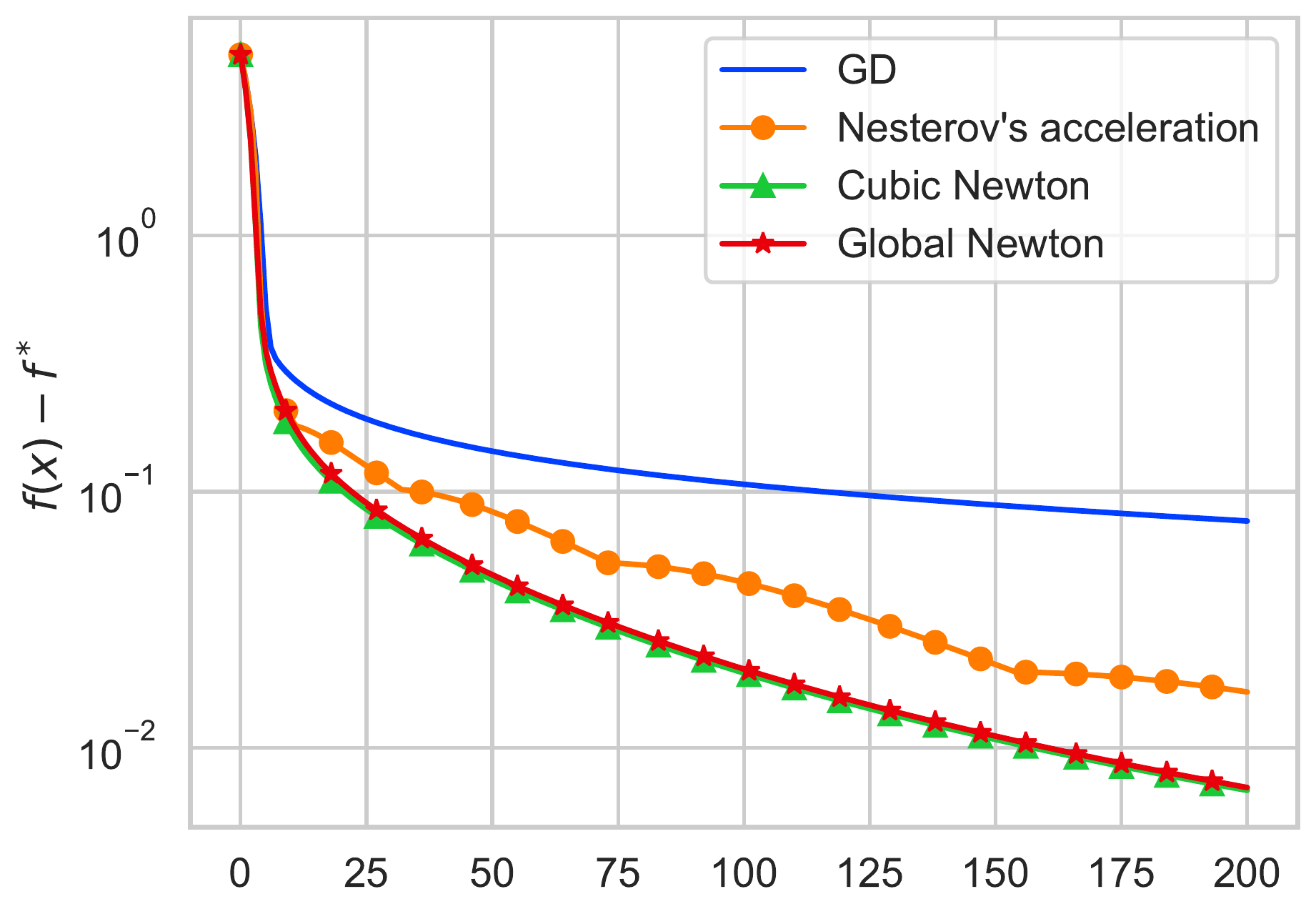}
    \includegraphics[scale=0.21]{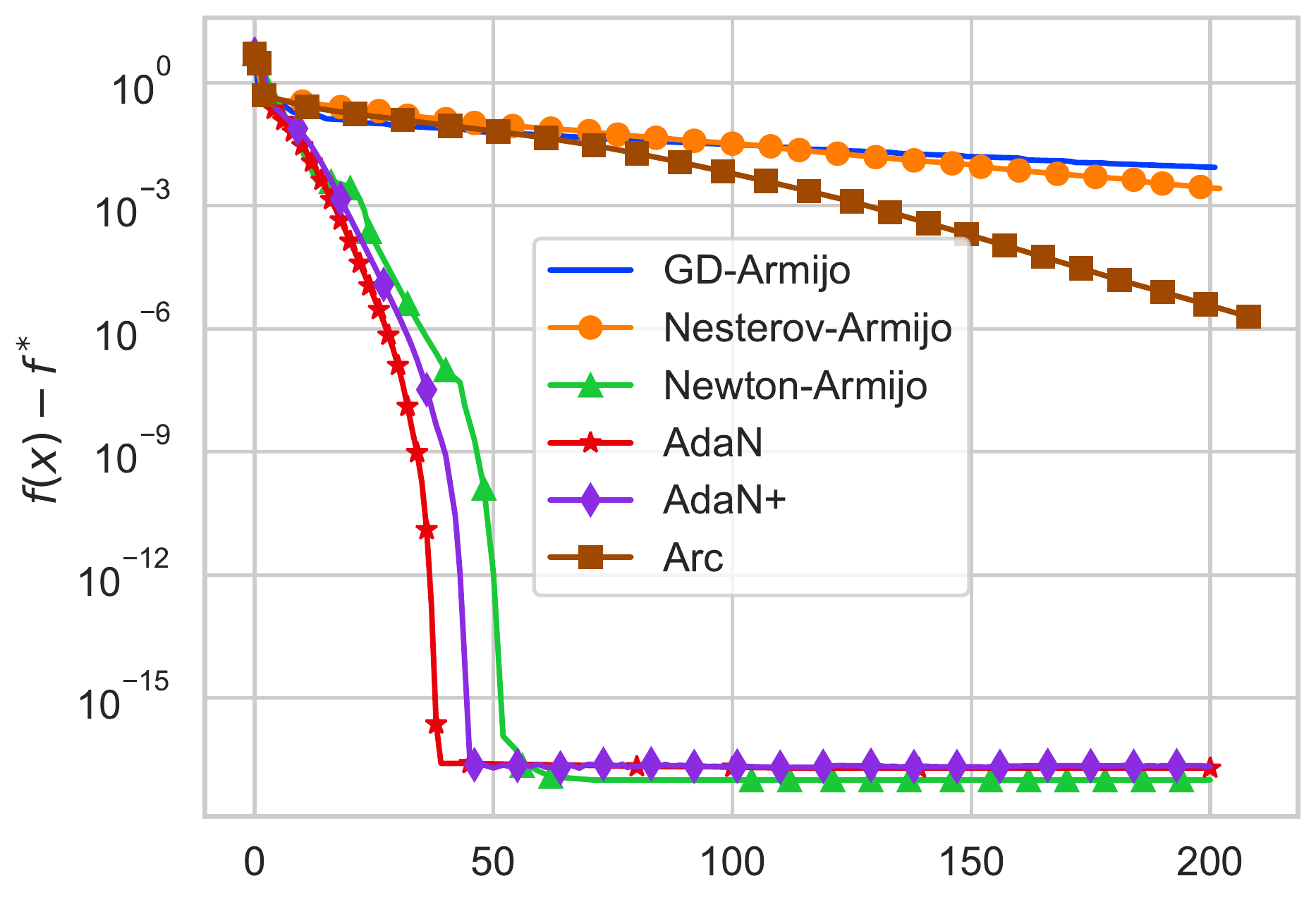}
    \caption{Numerical results on the $\ell_2$-regularized logistic regression problem with `w8a' dataset (two left plots) and `mushrooms' dataset (two right plots). Our non-adaptive method converged exactly the same way as cubic Newton. Overall, our adaptive methods and Newton method with Armijo line search performed the best.}
    \label{fig:log_reg}
\end{figure}

It is also worth noting that in practice, it is better to avoid the expensive computation of inverse matrices and instead solve linear systems. In particular, if we want to compute $x^{k+1}=x^k - (\nabla^2 f(x^k)+\lambda_k\mI)^{-1}\nabla f(x^k)$, it would be easier to solve (in $\Delta$) the following linear system:
\begin{equation*}
    (\nabla^2 f(x^k)+\lambda_k\mI)\Delta = -\nabla f(x^k).
\end{equation*}
The solution $\Delta^k$ of the system above is then used to produce $x^{k+1}=x^k + \Delta^k$. It is a common practice to use some small value $\lambda_k>0$ just to avoid issues arising from machine-precision errors. This may give our algorithms an additional advantage if the objective turns out to be ill-conditioned.

\textbf{Used methods.} We compare our method with a few other standard methods, split into two groups: non-adaptive and adaptive. The non-adaptive methods are: gradient descent with constant stepsize (labeled as `GD' in the plots); Nesterov's accelerated gradient descent with restarts and constant stepsize; cubic Newton with an estimate of $H$; our Algorithm~\ref{alg:global_newton} with the same estimate of $H$ as in cubic Newton. The adaptive methods are: gradient descent with Armijo line search; Nesterov's acceleration with Armijo-like line search from \cite{nesterov2013gradient}; Newton's method with Armijo line search; Adaptive Regularisation with Cubics (ARC) \cite{cartis2011adaptive};  our Algorithms~\ref{alg:line_search} and \ref{alg:newton_heuristic}. 

The Armijo line search \cite{armijo1966} is combined with gradient descent and Newton's method as follows. Given an iterate $x^k$, the gradient descent direction $d^k=-\nabla f(x^k)$ or Newton's direction $d^k=-(\nabla^2 f(x^k))^{-1}\nabla f(x^k)$ is computed. Then, a coefficient $\alpha_k$ is initialized as $2\alpha_{k-1}$ and divided by 2 until it satisfies the Armijo condition: $f(x^k + \alpha_k d^k)\le f(x^k) + \frac{\alpha_k}{2}\<\nabla f(x^k), d^k>$. Once such $\alpha_k$ is found, the iterate is updated as $x^{k+1} = x^k + \alpha_k d^k$. For the Arc method, we use the same hyperparameters as given in Section 7 of \cite{cartis2011adaptive}, except that we additionally divided $\sigma$ by 2 for very successful iterations to improve its performance. Additional implementation details can be found in the source code.

\textbf{Logistic regression.} Our first experiment concerns the logistic regression problem with $\ell_2$ regularization:
\[
    \min_{x\in\R^d} \frac{1}{n}\sum_{i=1}^n \left(-b_i\log(\sigma(a_i^\top x)) - (1-b_i)\log(1-\sigma(a_i^\top x)) \right) + \frac{\ell}{2}\|x\|^2,
\]
where $\sigma\colon\R\to (0, 1)$ is the sigmoid function, $\mA=(a_{ij})\in\R^{n\times d}$ is the matrix of features, and $b_i\in\{0, 1\}$ is the label of the $i$-th sample.
We use the `w8a' and `mushrooms' datasets from the LIBSVM package, and set $\ell=10^{-10}$ to make the problem ill-conditioned, where $L=\|\mA\|^2/n$ is the Lipschitz constant of the gradient. The results are reported in Figure~\ref{fig:log_reg}. To set $H$, we upper bound the Lipschitz Hessian constant of this function as $\sup_{x\in\R^d}\|\nabla^3 f(x)\|\le \frac{1}{6\sqrt{3}}\max_{i}\|a_i\| \|\mA\|^2$. This estimate is not tight, which causes cubic Newton and \Cref{alg:global_newton} to converge very slowly. The adaptive estimators, in contrast, converge after a very small number of iterations. We implemented the iterations of cubic Newton using a binary search in regularization, which, unfortunately, was many times slower than the fast iterations of our algorithm. Nevertheless, we report iteration convergence in our results to better highlight how close our method stays to cubic Newton in the non-adaptive case. We use initialization $x^0$ proportional to the vector of ones to better see the global properties.

\begin{figure}
    \centering
    \includegraphics[scale=0.28]{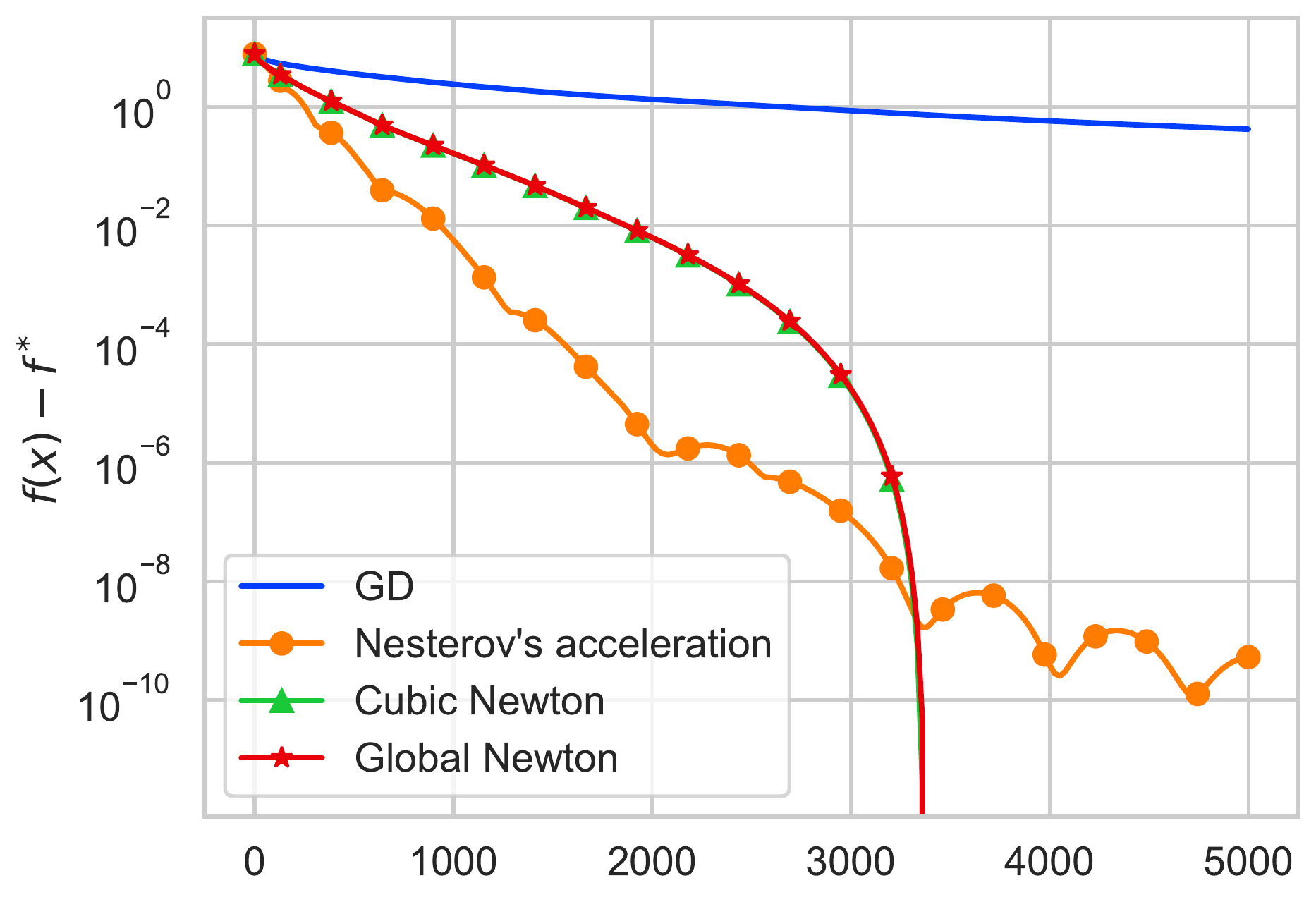}
    \includegraphics[scale=0.28]{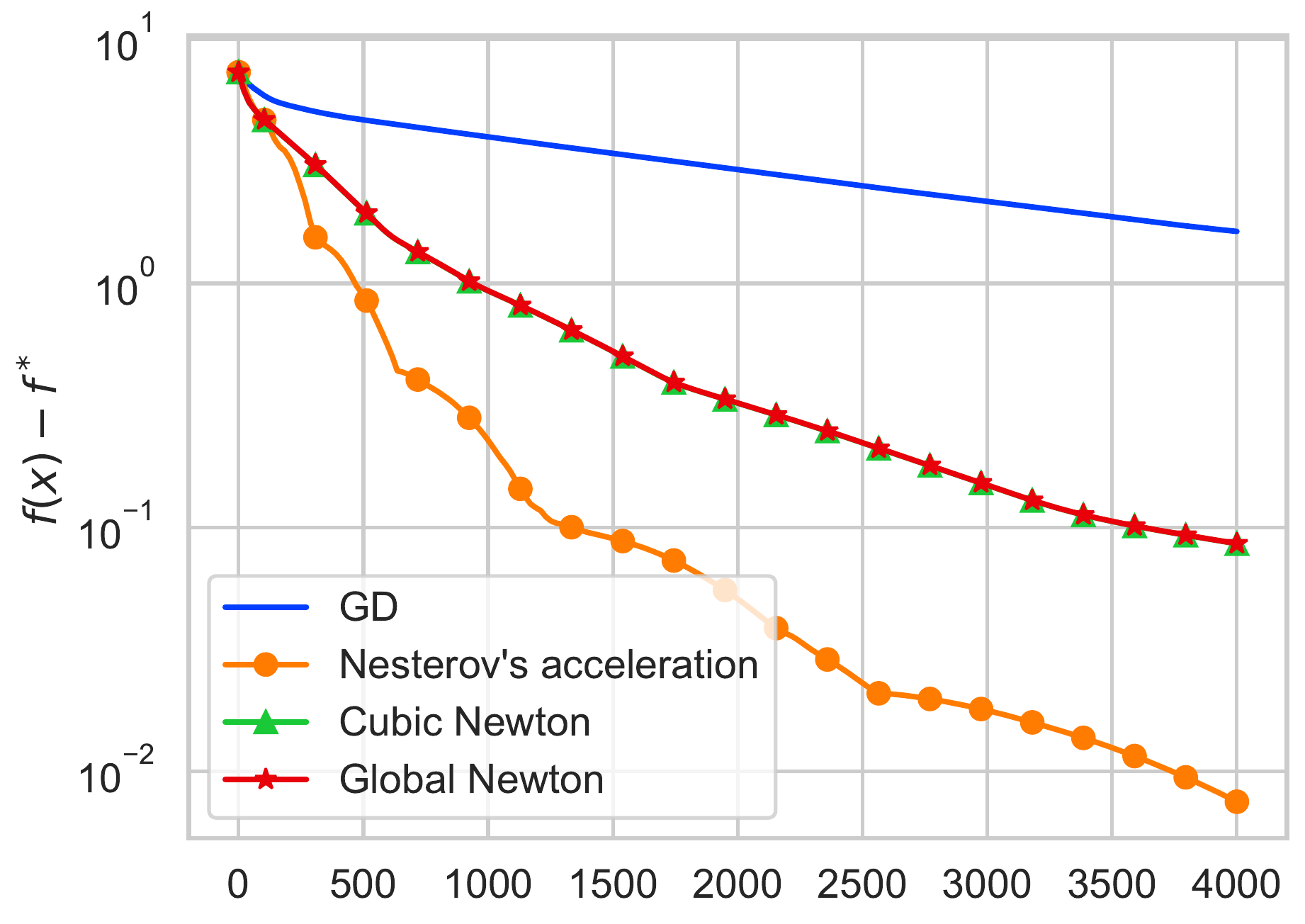}
    \includegraphics[scale=0.28]{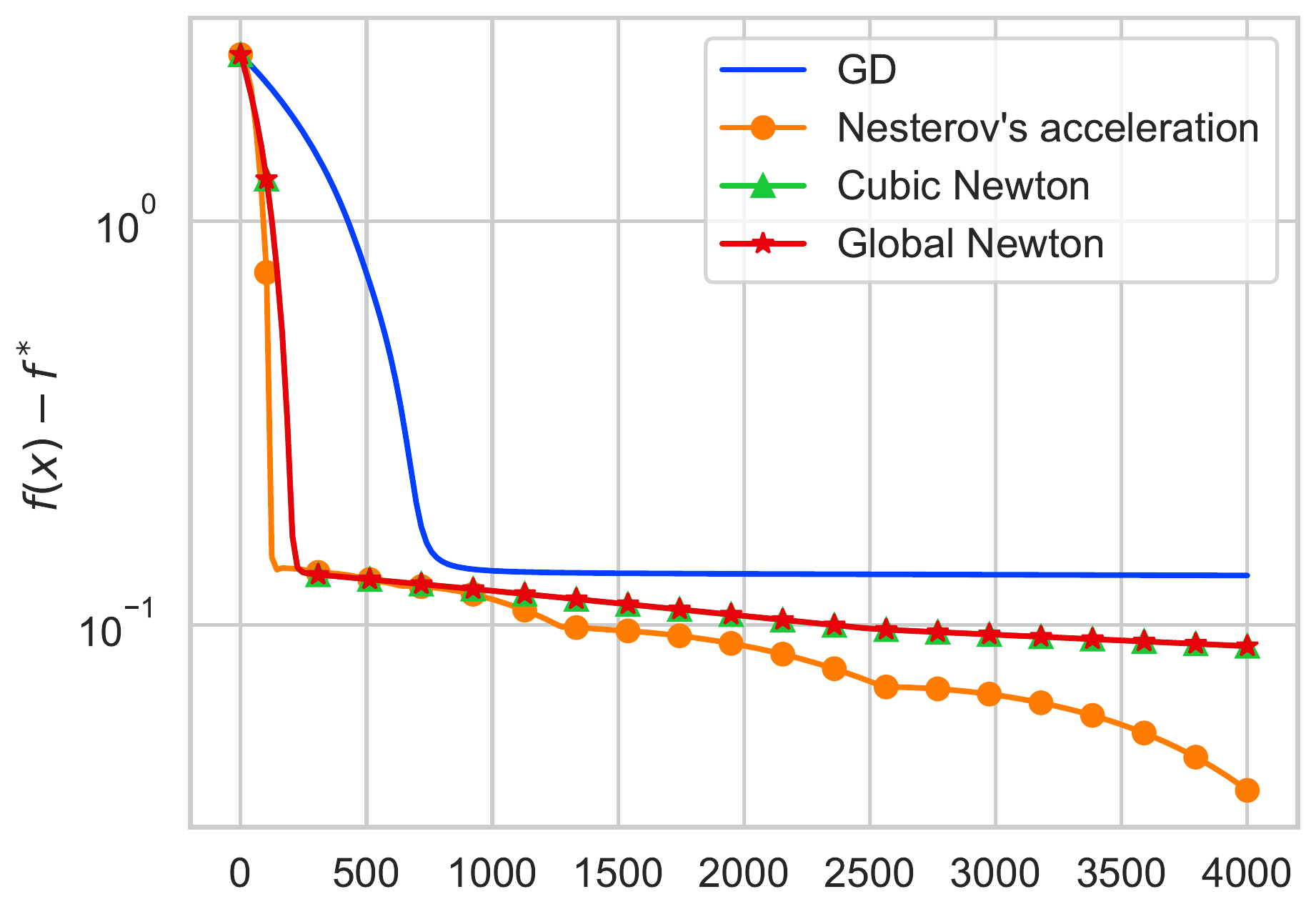}\\
    \includegraphics[scale=0.28]{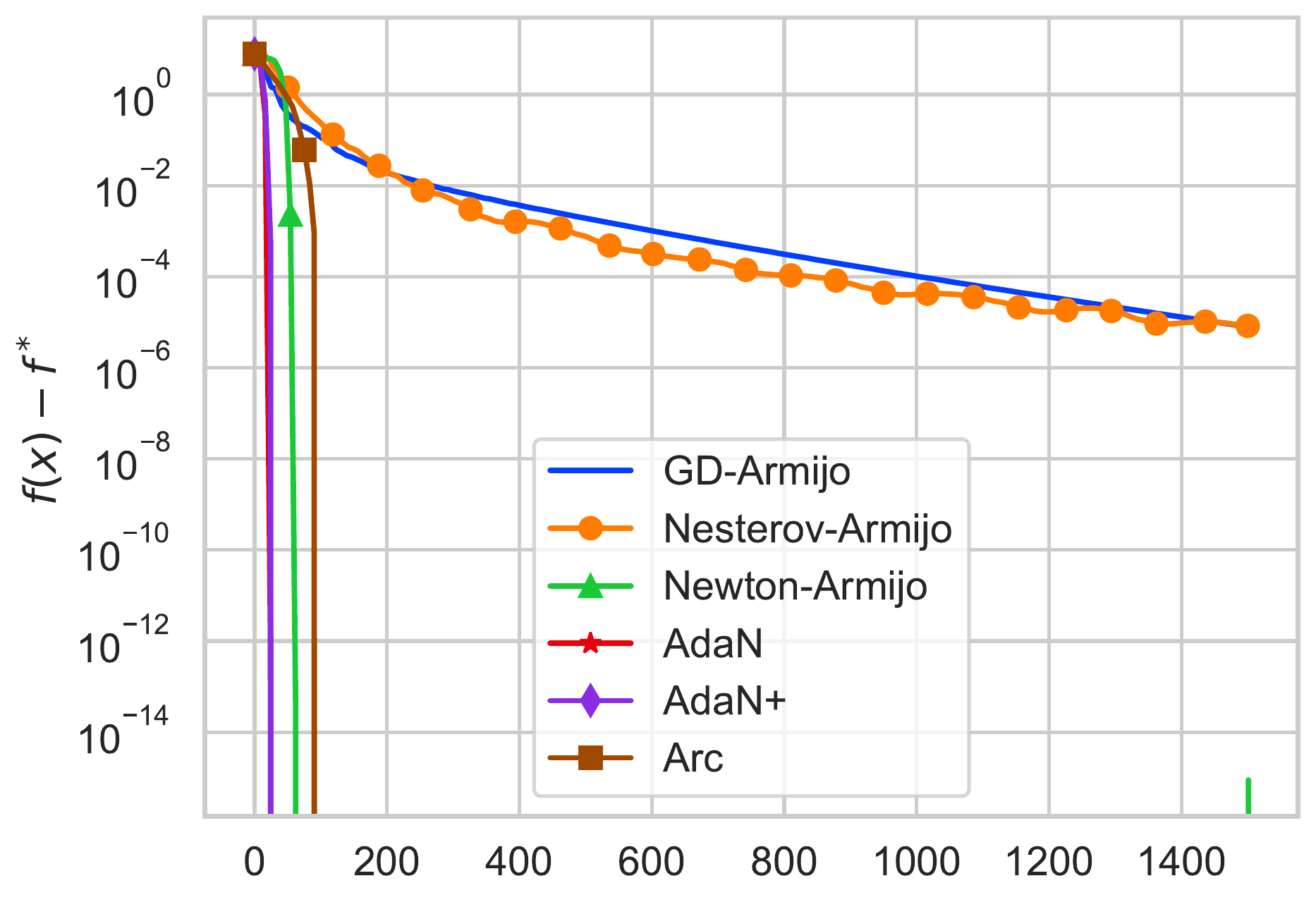}
    \includegraphics[scale=0.28]{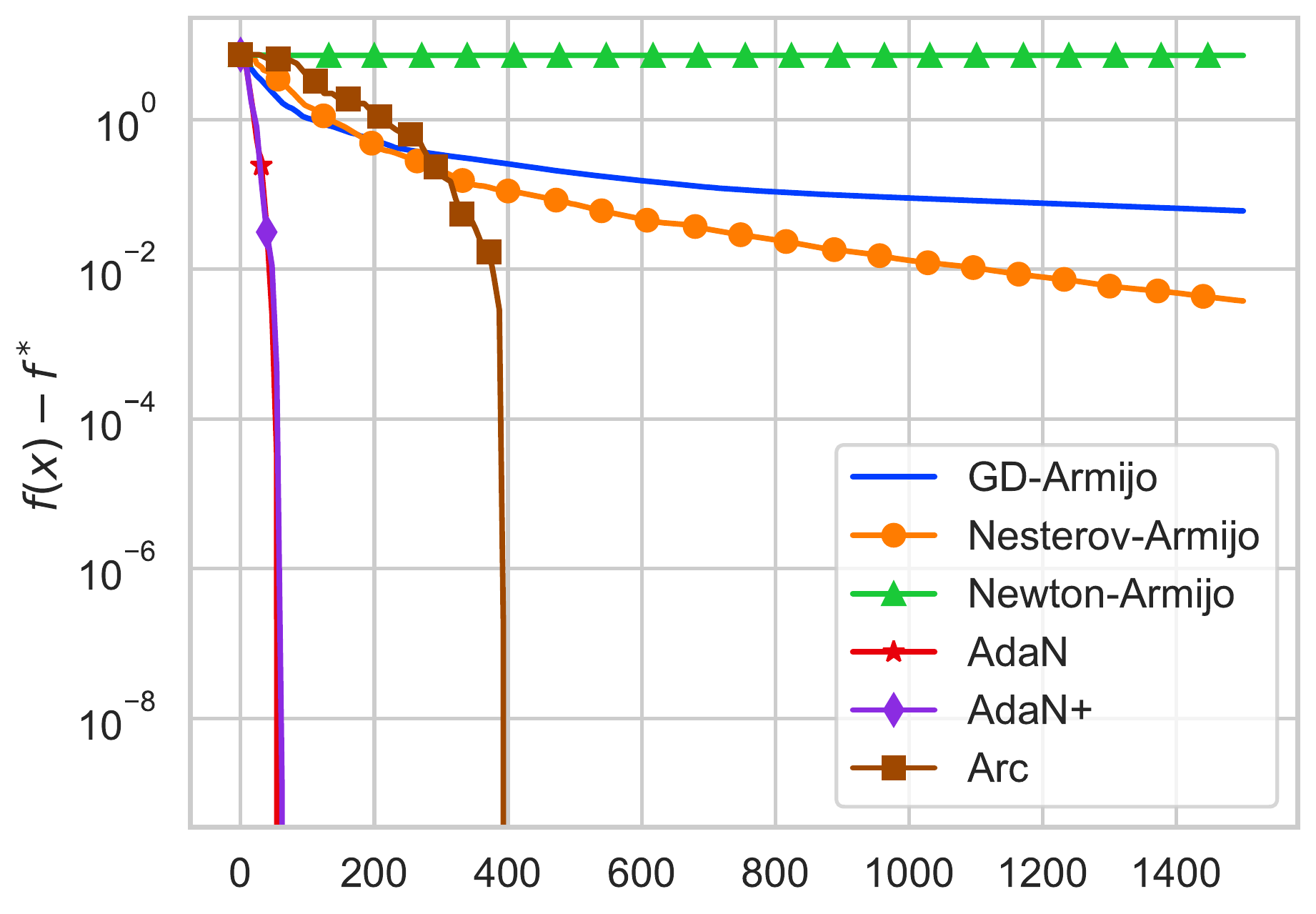}
    \includegraphics[scale=0.28]{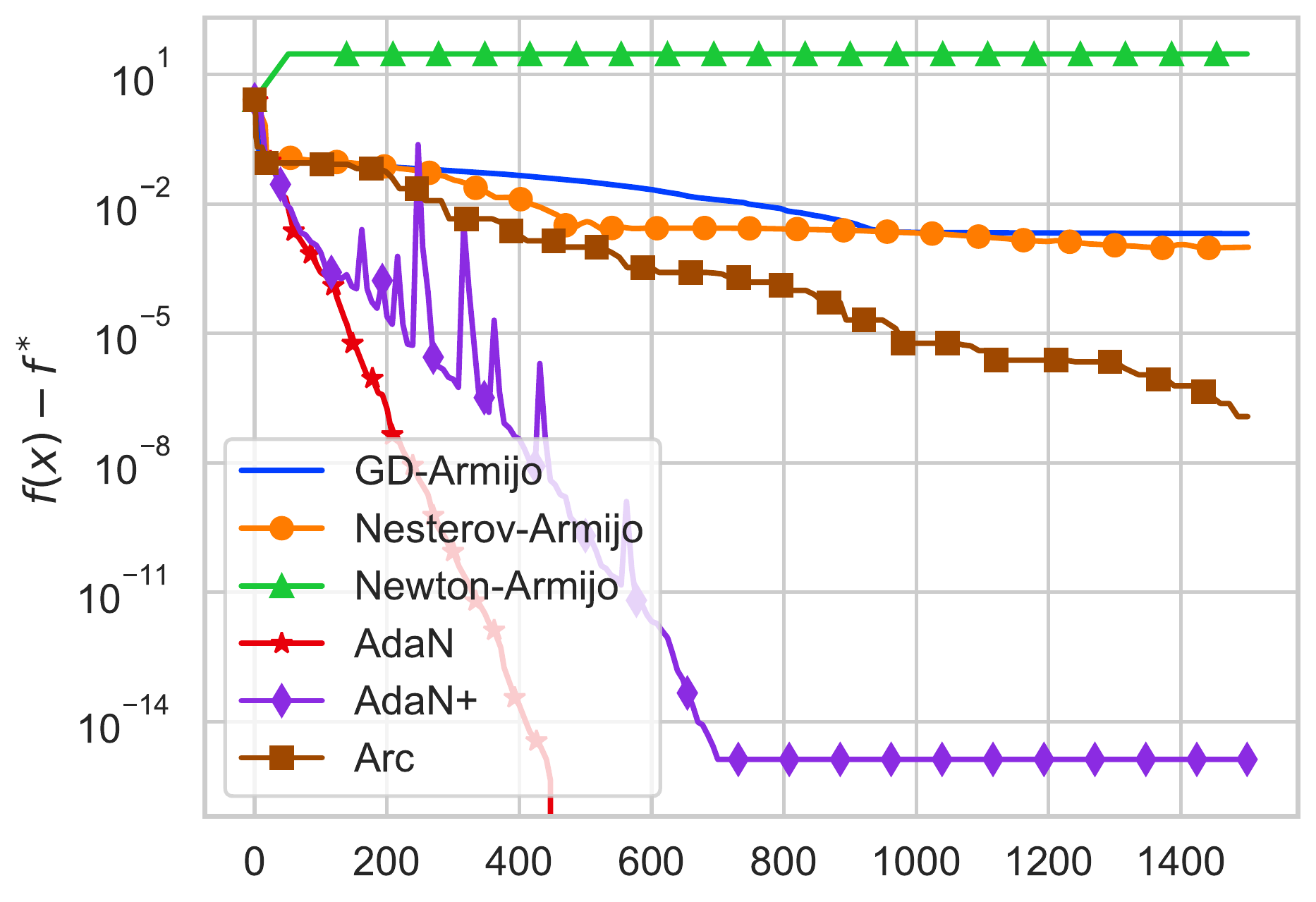}
    \caption{Numerical results on the log-sum-exp objective with different values of $\rho$: $\rho=0.5$ (left), $\rho=0.25$ (middle) and $\rho=0.05$ (right). The top row shows non-adaptive methods and the bottom row shows adaptive methods. Only our methods and Arc converged for $\rho\in\{0.25, 0.05\}$.}
    \label{fig:logsumexp}
\end{figure}
\textbf{Log-sum-exp.} In our second experiment, we consider a significantly more ill-conditioned problem of minimizing
\[
    \min_{x\in\R^d} \rho\log\left(\sum_{i=1}^n \exp\left(\frac{a_i^\top x - b_i}{\rho}\right)\right) , 
\]
where $a_1,\dotsc, a_n\in\mathbb{R}^d$ are some vectors and $\rho, b_1,\dotsc, b_n$ are scalars. This objectives serves as a smooth approximation of function $\max\{a_1^\top x-b_1,\dotsc, a_n^\top x - b_n\}$, with $\rho>0$ controlling the tightness of approximation. We set $n=500$, $d=200$ and randomly generate $a_1,\dotsc, a_n$ and $b_1,\dotsc, b_n$. After that, we run our experiments for three choices of $\rho$, namely $\rho\in\{0.5, 0.25, 0.05\}$. The results are reported in Figure~\ref{fig:logsumexp}. As one can notice, only Algorithms~\ref{alg:line_search}, \ref{alg:newton_heuristic}, and Arc, performed well in all experiments. Armijo line search was the worst in the last two experiments, most likely due to numerical instability and ill conditioning of the objective.  Algorithm~\ref{alg:newton_heuristic} was less stable than Algorithm~\ref{alg:line_search}, which is expected since the former is a simpler heuristic modification of the latter.

\section{Conclusion}
In this paper, we presented a proof that a simple gradient-based regularization allows Newton method to converge globally. Our proof relies on new techniques and appears to be less trivial than that of cubic Newton. At the same time, our analysis has a lot in common with that of cubic Newton and the regularization technique has been known in the literature for a long time. We hope that many existing extensions of cubic Newton, such as its acceleration~\cite{nesterov2008accelerating}, will become possible with future work. It would be very exciting to see other extensions, for instance, stochastic variants, and quasi-Newton estimation of the Hessian.

\bibliography{global_newton.bib}

\begin{thebibliography}{10}

\bibitem{arjevani2019oracle}
Yossi Arjevani, Ohad Shamir, and Ron Shiff.
\newblock Oracle complexity of second-order methods for smooth convex
  optimization.
\newblock {\em Mathematical Programming}, 178(1):327--360, 2019.

\bibitem{armijo1966}
Larry Armijo.
\newblock Minimization of functions having lipschitz continuous first partial
  derivatives.
\newblock {\em Pacific Journal of Mathematics}, 16(1):1--3, 1966.

\bibitem{attouch2015dynamic}
Hedy Attouch, Maicon~Marques Alves, and Benar~Fux Svaiter.
\newblock A dynamic approach to a proximal-{Newton} method for monotone
  inclusions in {Hilbert} spaces, with complexity {$O (1/n^2)$}.
\newblock {\em Journal of Convex Analysis}, 23(1):139--180, 2016.

\bibitem{attouch2011continuous}
Hedy Attouch and Benar~Fux Svaiter.
\newblock A continuous dynamical {N}ewton-like approach to solving monotone
  inclusions.
\newblock {\em SIAM Journal on Control and Optimization}, 49(2):574--598, 2011.

\bibitem{baes2009estimate}
Michel Baes.
\newblock Estimate sequence methods: extensions and approximations.
\newblock {\em Institute for Operations Research, ETH, Z{\"u}rich,
  Switzerland}, page~5, 2009.

\bibitem{bannwarth2019gfn2}
Christoph Bannwarth, Sebastian Ehlert, and Stefan Grimme.
\newblock {GFN2-xTB}—an accurate and broadly parametrized self-consistent
  tight-binding quantum chemical method with multipole electrostatics and
  density-dependent dispersion contributions.
\newblock {\em Journal of chemical theory and computation}, 15(3):1652--1671,
  2019.

\bibitem{bechtle2020curious}
Sarah Bechtle, Yixin Lin, Akshara Rai, Ludovic Righetti, and Franziska Meier.
\newblock Curious {iLQR}: Resolving uncertainty in model-based {RL}.
\newblock In {\em Conference on Robot Learning}, pages 162--171. PMLR, 2020.

\bibitem{bergou2019convergence}
El~Houcine Bergou, Youssef Diouane, and Vyacheslav Kungurtsev.
\newblock Convergence and complexity analysis of a levenberg--marquardt
  algorithm for inverse problems.
\newblock {\em Journal of Optimization Theory and Applications},
  185(3):927--944, 2020.

\bibitem{birgin2017use}
Ernesto~G. Birgin and Jos{\'e}~Mario Mart{\'i}nez.
\newblock The use of quadratic regularization with a cubic descent condition
  for unconstrained optimization.
\newblock {\em SIAM Journal on Optimization}, 27(2):1049--1074, 2017.

\bibitem{bolte2020curiosities}
J{\'e}r{\^o}me Bolte and Edouard Pauwels.
\newblock Curiosities and counterexamples in smooth convex optimization.
\newblock {\em Mathematical Programming}, pages 1--51, 2021.

\bibitem{bottou2010large}
L{\'e}on Bottou.
\newblock Large-scale machine learning with stochastic gradient descent.
\newblock In {\em Proceedings of COMPSTAT'2010}, pages 177--186. Springer,
  2010.

\bibitem{Bottou18}
L{\'e}on Bottou, Frank~E. Curtis, and Jorge Nocedal.
\newblock Optimization methods for large-scale machine learning.
\newblock {\em SIAM Review}, 60(2):223--311, 2018.

\bibitem{boyd2004convex}
Stephen~P. Boyd and Lieven Vandenberghe.
\newblock {\em Convex Optimization}.
\newblock Cambridge University Press, 2004.

\bibitem{cao2018openpose}
Zhe Cao, Gines Hidalgo, Tomas Simon, Shih-En Wei, and Yaser Sheikh.
\newblock {OpenPose}: Realtime multi-person {2D} pose estimation using part
  affinity fields.
\newblock {\em IEEE transactions on pattern analysis and machine intelligence},
  43(1):172--186, 2019.

\bibitem{cartis2011adaptive}
Coralia Cartis, Nicholas I.~M. Gould, and Philippe~L. Toint.
\newblock Adaptive cubic regularisation methods for unconstrained optimization.
  {Part I}: motivation, convergence and numerical results.
\newblock {\em Mathematical Programming}, 127(2):245--295, 2011.

\bibitem{cartis2012much}
Coralia Cartis, Nicholas I.~M. Gould, and Philippe~L. Toint.
\newblock How much patience do you have? a worst-case perspective on smooth
  nonconvex optimization.
\newblock {\em Optima}, 88:1--10, 2012.

\bibitem{cartis2013evaluation}
Coralia Cartis, Nicholas I.~M. Gould, and Philippe~L. Toint.
\newblock On the evaluation complexity of cubic regularization methods for
  potentially rank-deficient nonlinear least-squares problems and its relevance
  to constrained nonlinear optimization.
\newblock {\em SIAM Journal on Optimization}, 23(3):1553--1574, 2013.

\bibitem{cartis2019universal}
Coralia Cartis, Nicholas I.~M. Gould, and Philippe~L. Toint.
\newblock Universal regularization methods: varying the power, the smoothness
  and the accuracy.
\newblock {\em SIAM Journal on Optimization}, 29(1):595--615, 2019.

\bibitem{conn2000trust}
Andrew~R. Conn, Nicholas I.~M. Gould, and Philippe~L. Toint.
\newblock {\em Trust region methods}.
\newblock SIAM, 2000.

\bibitem{crane2020dino}
Rixon Crane and Fred Roosta.
\newblock {DINO}: Distributed {N}ewton-type optimization method.
\newblock In {\em Proceedings of the 37th International Conference on Machine
  Learning}, 2020.

\bibitem{crockett1955gradient}
Jean~Bronfenbrenner Crockett and Herman Chernoff.
\newblock Gradient methods of maximization.
\newblock {\em Pacific Journal of Mathematics}, 5(1):33--50, 1955.

\bibitem{dan2002convergence}
Hiroshige Dan, Nobuo Yamashita, and Masao Fukushima.
\newblock Convergence properties of the inexact {L}evenberg-{M}arquardt method
  under local error bound conditions.
\newblock {\em Optimization Methods and Software}, 17(4):605--626, 2002.

\bibitem{doikov2022super}
Nikita Doikov, Konstantin Mishchenko, and Yurii Nesterov.
\newblock Super-universal regularized {Newton} method.
\newblock {\em arXiv preprint arXiv:2208.05888}, 2022.

\bibitem{doikov2020inexact}
Nikita Doikov and Yurii Nesterov.
\newblock Inexact tensor methods with dynamic accuracies.
\newblock In {\em ICML}, pages 2577--2586, 2020.

\bibitem{doikov2021optimization}
Nikita Doikov and Yurii Nesterov.
\newblock Optimization methods for fully composite problems.
\newblock {\em arXiv preprint arXiv:2103.12632}, 2021.

\bibitem{doikov2018randomized}
Nikita Doikov and Peter Richt{\'a}rik.
\newblock Randomized block cubic {Newton} method.
\newblock In {\em Proceedings of the 35th International Conference on Machine
  Learning}, pages 1290--1298. PMLR, 2018.

\bibitem{dunner2018distributed}
Celestine D\"unner, Aurelien Lucchi, Matilde Gargiani, An~Bian, Thomas Hofmann,
  and Martin Jaggi.
\newblock A distributed second-order algorithm you can trust.
\newblock In {\em Proceedings of the 35th International Conference on Machine
  Learning}, pages 1358--1366, 2018.

\bibitem{fan2005quadratic}
Jin-yan Fan and Ya-xiang Yuan.
\newblock On the quadratic convergence of the {Levenberg-Marquardt} method
  without nonsingularity assumption.
\newblock {\em Computing}, 74(1):23--39, 2005.

\bibitem{fletcher1971modified}
Roger Fletcher.
\newblock A modified {Marquardt} subroutine for nonlinear least squares.
\newblock {\em Theoretical Physics Division, AERE Harwell, Report No. R-6799},
  1971.

\bibitem{gasnikov2019near}
Alexander Gasnikov, Pavel Dvurechensky, Eduard Gorbunov, Evgeniya Vorontsova,
  Daniil Selikhanovych, C{\'e}sar~A. Uribe, Bo~Jiang, Haoyue Wang, Shuzhong
  Zhang, S{\'e}bastien Bubeck, Qijia Jiang, Yin~Tat Lee, Yuanzhi Li, and Aaron
  Sidford.
\newblock Near optimal methods for minimizing convex functions with lipschitz
  $p$-th derivatives.
\newblock In {\em Conference on Learning Theory}, pages 1392--1393. PMLR, 2019.

\bibitem{goldfarb2020practical}
Donald Goldfarb, Yi~Ren, and Achraf Bahamou.
\newblock Practical quasi-{N}ewton methods for training deep neural networks.
\newblock {\em Advances in Neural Information Processing Systems}, 33, 2020.

\bibitem{gower2019rsn}
Robert~M. Gower, Dmitry Kovalev, Felix Lieder, and Peter Richt{\'a}rik.
\newblock {RSN}: Randomized subspace {N}ewton.
\newblock In {\em Advances in Neural Information Processing Systems}, pages
  616--625, 2019.

\bibitem{Gower2019sgd}
Robert~M. Gower, Nicolas Loizou, Xun Qian, Alibek Sailanbayev, Egor Shulgin,
  and Peter Richt\'{a}rik.
\newblock {SGD}: general analysis and improved rates.
\newblock In {\em Proceedings of the 36th International Conference on Machine
  Learning}, volume~97, pages 5200--5209. PMLR, 2019.

\bibitem{griewank1981modification}
Andreas Griewank.
\newblock The modification of {Newton}’s method for unconstrained
  optimization by bounding cubic terms.
\newblock Technical report, Technical report NA/12, 1981.

\bibitem{grippo1986nonmonotone}
Luigi Grippo, Francesco Lampariello, and Stephano Luclidi.
\newblock A nonmonotone line search technique for {N}ewton’s method.
\newblock {\em SIAM Journal on Numerical Analysis}, 23(4):707--716, 1986.

\bibitem{hanzely2020stochastic}
Filip Hanzely, Nikita Doikov, Peter Richt{\'a}rik, and Yurii Nesterov.
\newblock Stochastic subspace cubic {N}ewton method.
\newblock In {\em Proceedings of the 37th International Conference on Machine
  Learning}, pages 4027--4038, 2020.

\bibitem{iqbal2015levenberg}
Javed Iqbal, Asif Iqbal, and Muhammad Arif.
\newblock Levenberg--{M}arquardt method for solving systems of absolute value
  equations.
\newblock {\em Journal of Computational and Applied Mathematics}, 282:134--138,
  2015.

\bibitem{jarre2016simple}
Florian Jarre and Philippe~L. Toint.
\newblock Simple examples for the failure of {Newton’s} method with line
  search for strictly convex minimization.
\newblock {\em Mathematical Programming}, 158(1):23--34, 2016.

\bibitem{karnchanachari20practical}
Napat Karnchanachari, Miguel de~la Iglesia~Valls, David Hoeller, and Marco
  Hutter.
\newblock Practical reinforcement learning for {MPC}: Learning from sparse
  objectives in under an hour on a real robot.
\newblock In {\em Proceedings of the 2nd Conference on Learning for Dynamics
  and Control}, volume 120 of {\em Proceedings of Machine Learning Research},
  pages 211--224. PMLR, 2020.

\bibitem{kovalev2019stochastic}
Dmitry Kovalev, Konstantin Mishchenko, and Peter Richt{\'a}rik.
\newblock Stochastic {N}ewton and cubic {N}ewton methods with simple local
  linear-quadratic rates.
\newblock {\em arXiv preprint arXiv:1912.01597}, 2019.

\bibitem{levenberg1944method}
Kenneth Levenberg.
\newblock A method for the solution of certain non-linear problems in least
  squares.
\newblock {\em Quarterly of applied mathematics}, 2(2):164--168, 1944.

\bibitem{li2004regularized}
Dong-Hui Li, Masao Fukushima, Liqun Qi, and Nobuo Yamashita.
\newblock Regularized {N}ewton methods for convex minimization problems with
  singular solutions.
\newblock {\em Computational optimization and applications}, 28(2):131--147,
  2004.

\bibitem{malitsky20adaptive}
Yura Malitsky and Konstantin Mishchenko.
\newblock Adaptive gradient descent without descent.
\newblock In {\em Proceedings of the 37th International Conference on Machine
  Learning}, volume 119 of {\em Proceedings of Machine Learning Research},
  pages 6702--6712. PMLR, 13--18 Jul 2020.

\bibitem{marquardt1963algorithm}
Donald~W. Marquardt.
\newblock An algorithm for least-squares estimation of nonlinear parameters.
\newblock {\em Journal of the society for Industrial and Applied Mathematics},
  11(2):431--441, 1963.

\bibitem{marteau2019globally}
Ulysse Marteau-Ferey, Francis Bach, and Alessandro Rudi.
\newblock Globally convergent {Newton} methods for ill-conditioned generalized
  self-concordant losses.
\newblock In {\em Advances in Neural Information Processing Systems}, pages
  7636--7646, 2019.

\bibitem{martens2010deep}
James Martens.
\newblock Deep learning via {H}essian-free optimization.
\newblock In {\em ICML}, volume~27, pages 735--742, 2010.

\bibitem{martens2015optimizing}
James Martens and Roger Grosse.
\newblock Optimizing neural networks with {K}ronecker-factored approximate
  curvature.
\newblock In {\em Proceedings of the 32nd International Conference on Machine
  Learning}, pages 2408--2417, 2015.

\bibitem{marumo2020constrained}
Naoki Marumo, Takayuki Okuno, and Akiko Takeda.
\newblock Constrained {Levenberg-Marquardt} method with global complexity
  bound.
\newblock {\em arXiv preprint arXiv:2004.08259}, 2020.

\bibitem{mascarenhas2007divergence}
Walter~F. Mascarenhas.
\newblock On the divergence of line search methods.
\newblock {\em Computational \& Applied Mathematics}, 26:129--169, 2007.

\bibitem{mishchenko2020random}
Konstantin Mishchenko, Ahmed Khaled, and Peter Richt{\'a}rik.
\newblock {Random Reshuffling}: {S}imple analysis with vast improvements.
\newblock {\em Advances in Neural Information Processing Systems},
  33:17309--17320, 2020.

\bibitem{mokhtari2016adaptive}
Aryan Mokhtari, Hadi Daneshmand, Aurelien Lucchi, Thomas Hofmann, and Alejandro
  Ribeiro.
\newblock Adaptive {N}ewton method for empirical risk minimization to
  statistical accuracy.
\newblock In {\em Advances in Neural Information Processing Systems}, pages
  4062--4070, 2016.

\bibitem{more1978levenberg}
Jorge~J. Mor{\'e}.
\newblock The {Levenberg-Marquardt} algorithm: implementation and theory.
\newblock In {\em Numerical analysis}, pages 105--116. Springer, 1978.

\bibitem{nesterov2007modified}
Yurii Nesterov.
\newblock Modified {Gauss}--{Newton} scheme with worst case guarantees for
  global performance.
\newblock {\em Optimisation Methods and Software}, 22(3):469--483, 2007.

\bibitem{nesterov2008accelerating}
Yurii Nesterov.
\newblock Accelerating the cubic regularization of {N}ewton’s method on
  convex problems.
\newblock {\em Mathematical Programming}, 112(1):159--181, 2008.

\bibitem{nesterov2013gradient}
Yurii Nesterov.
\newblock Gradient methods for minimizing composite functions.
\newblock {\em Mathematical Programming}, 140(1):125--161, 2013.

\bibitem{nesterov2013}
Yurii Nesterov.
\newblock {\em Introductory lectures on convex optimization: A basic course},
  volume~87.
\newblock Springer Science \& Business Media, 2013.

\bibitem{nesterov2018lectures}
Yurii Nesterov.
\newblock {\em Lectures on convex optimization}, volume 137.
\newblock Springer, 2018.

\bibitem{nesterov2019implementable}
Yurii Nesterov.
\newblock Implementable tensor methods in unconstrained convex optimization.
\newblock {\em Mathematical Programming}, pages 1--27, 2019.

\bibitem{nesterov2020inexact}
Yurii Nesterov.
\newblock Inexact accelerated high-order proximal-point methods.
\newblock {\em CORE preprint}, 2020.

\bibitem{nesterov2020superfast}
Yurii Nesterov.
\newblock Superfast second-order methods for unconstrained convex optimization.
\newblock {\em CORE preprint}, 2020.

\bibitem{nesterov2006cubic}
Yurii Nesterov and Boris~T. Polyak.
\newblock Cubic regularization of {Newton} method and its global performance.
\newblock {\em Mathematical Programming}, 108(1):177--205, 2006.

\bibitem{ortega1970iterative}
James~M. Ortega and Werner~C. Rheinboldt.
\newblock {\em Iterative Solution of Nonlinear Equations in Several Variables}.
\newblock Academic Press, 1970.

\bibitem{polyak2009regularized}
Roman~A. Polyak.
\newblock Regularized {Newton} method for unconstrained convex optimization.
\newblock {\em Mathematical programming}, 120(1):125--145, 2009.

\bibitem{ren2019efficient}
Yi~Ren and Donald Goldfarb.
\newblock Efficient subsampled {G}auss-{N}ewton and natural gradient methods
  for training neural networks.
\newblock {\em arXiv preprint arXiv:1906.02353}, 2019.

\bibitem{rodomanov2016superlinearly}
Anton Rodomanov and Dmitry Kropotov.
\newblock A superlinearly-convergent proximal {Newton}-type method for the
  optimization of finite sums.
\newblock In {\em International Conference on Machine Learning}, pages
  2597--2605. PMLR, 2016.

\bibitem{rodomanov2020greedy}
Anton Rodomanov and Yurii Nesterov.
\newblock Greedy quasi-newton methods with explicit superlinear convergence.
\newblock {\em SIAM Journal on Optimization}, 31(1):785--811, 2021.

\bibitem{solodov1998globally}
Michael~V. Solodov and Benav~F. Svaiter.
\newblock A globally convergent inexact {Newton} method for systems of monotone
  equations.
\newblock In {\em Reformulation: Nonsmooth, Piecewise Smooth, Semismooth and
  Smoothing Methods}, pages 355--369. Springer, 1998.

\bibitem{soori2020dave}
Saeed Soori, Konstantin Mishchenko, Aryan Mokhtari, Maryam~Mehri Dehnavi, and
  Mert G{\"u}rb{\"u}zbalaban.
\newblock {DAve-QN}: A distributed averaged quasi-{N}ewton method with local
  superlinear convergence rate.
\newblock In {\em International Conference on Artificial Intelligence and
  Statistics}, pages 1965--1976, 2020.

\bibitem{tassa2014control}
Yuval Tassa, Nicolas Mansard, and Emo Todorov.
\newblock Control-limited differential dynamic programming.
\newblock In {\em 2014 IEEE International Conference on Robotics and Automation
  (ICRA)}, pages 1168--1175. IEEE, 2014.

\bibitem{ueda2014regularized}
Kenji Ueda and Nobuo Yamashita.
\newblock A regularized {Newton} method without line search for unconstrained
  optimization.
\newblock {\em Computational Optimization and Applications}, 59(1):321--351,
  2014.

\bibitem{yamashita2001rate}
Nobuo Yamashita and Masao Fukushima.
\newblock On the rate of convergence of the {L}evenberg-{M}arquardt method.
\newblock In {\em Topics in numerical analysis}, pages 239--249. Springer,
  2001.

\bibitem{zappone2019wireless}
Alessio Zappone, Marco Di~Renzo, and M{\'e}rouane Debbah.
\newblock Wireless networks design in the era of deep learning: Model-based,
  {AI}-based, or both?
\newblock {\em IEEE Transactions on Communications}, 67(10):7331--7376, 2019.

\end{thebibliography}
\bibliographystyle{plain}

\clearpage
\appendix
\section{Proofs}
For the main theorem, we are going to need the following proposition, which has been established as part of the Proof of Theorem 4.1.4 in~\cite{nesterov2018lectures}.
\begin{proposition}\label{pr:sequence}
    Let nonnegative sequence $\{\alpha_k\}_{k=0}^\infty$ satisfy $\alpha_{k+1}\le \alpha_k - \frac{2}{3}\alpha_k^{3/2}$. Then it holds for any $k$
    \[
        \alpha_{k+1} \le \frac{1}{(1+k/3)^2}.
    \]
\end{proposition}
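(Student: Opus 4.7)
The plan is to linearize the recursion by the substitution $\beta_k \eqdef 1/\sqrt{\alpha_k}$ and show that $\beta_k$ grows by at least $1/3$ per step. Squaring and inverting then immediately yields the stated $\mathcal{O}(1/k^2)$ bound.

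First, I would dispense with boundary cases. If $\alpha_k = 0$ for some $k$, the recursion forces $\alpha_j = 0$ for all $j \ge k$ and the claim is trivial. Next, I would establish the uniform a~priori bound $\alpha_k \le 9/4$: if $\alpha_k > 9/4$, the right-hand side $\alpha_k - \tfrac{2}{3}\alpha_k^{3/2}$ is strictly negative, which contradicts the nonnegativity assumption on $\alpha_{k+1}$. Equivalently, $\tfrac{2}{3}\sqrt{\alpha_k} \le 1$, which is exactly the condition needed to legitimize the algebra below.

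Next, for $\alpha_k \in (0, 9/4)$, I would take reciprocal square roots of the recursion and factor:
\[
    \frac{1}{\sqrt{\alpha_{k+1}}}
    \ge \frac{1}{\sqrt{\alpha_k - \tfrac{2}{3}\alpha_k^{3/2}}}
    = \frac{1}{\sqrt{\alpha_k}}\cdot\frac{1}{\sqrt{1 - \tfrac{2}{3}\sqrt{\alpha_k}}}.
\]
Then I would apply the elementary inequality $(1-x)^{-1/2} \ge 1 + x/2$, valid for $x \in [0,1)$, with $x = \tfrac{2}{3}\sqrt{\alpha_k}$. The $\sqrt{\alpha_k}$ factors cancel cleanly, leaving the additive recursion $\beta_{k+1} \ge \beta_k + 1/3$.

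Iterating gives $\beta_{k+1} \ge \beta_0 + (k+1)/3$, and since $\alpha_0 \le 9/4$ implies $\beta_0 \ge 2/3$, this simplifies to $\beta_{k+1} \ge 1 + k/3$. Squaring and inverting delivers $\alpha_{k+1} \le 1/(1+k/3)^2$. The main subtlety, such as it is, lies in guaranteeing that the factorization $\sqrt{\alpha_k - \tfrac{2}{3}\alpha_k^{3/2}} = \sqrt{\alpha_k}\sqrt{1 - \tfrac{2}{3}\sqrt{\alpha_k}}$ is well-defined and that the expansion $(1-x)^{-1/2} \ge 1 + x/2$ can be applied with the right sign; both are taken care of by the a~priori bound $\alpha_k \le 9/4$. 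Everything else is routine manipulation.
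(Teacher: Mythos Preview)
Your proof is correct and follows essentially the same route as the paper's: both arguments establish the a~priori bound $\alpha_k \le (3/2)^2$, pass to $\beta_k = 1/\sqrt{\alpha_k}$, show $\beta_{k+1} - \beta_k \ge 1/3$, and telescope using $\beta_0 \ge 2/3$. The only cosmetic difference is that the paper derives the increment via the identity $\beta_{k+1}-\beta_k = (\alpha_k-\alpha_{k+1})/[\sqrt{\alpha_k\alpha_{k+1}}(\sqrt{\alpha_k}+\sqrt{\alpha_{k+1}})]$ together with $\alpha_{k+1}\le\alpha_k$, whereas you factor and invoke $(1-x)^{-1/2}\ge 1+x/2$.
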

    Although the proof of Proposition~\ref{pr:sequence} a bit technical, we provide it here for completeness and better readability. 
\begin{proof}
    In a close resemblance to the proof technique in~\cite{nesterov2018lectures}, we aim at showing that the sequence $\frac{1}{\alpha_k}$ grows at least as fast as $\Omega((1+k/3)^2)$. From the bound on $\alpha_{k+1}$ we can immediately see that $\alpha_{k+1}\le \alpha_k<(\frac{3}{2})^2$ and
    \[
        \frac{1}{\sqrt{\alpha_{k+1}}} - \frac{1}{\sqrt{\alpha_k}}
        = \frac{\sqrt{\alpha_k} - \sqrt{\alpha_{k+1}}}{\sqrt{\alpha_k \alpha_{k+1}}}
        = \frac{\alpha_k - \alpha_{k+1}}{\sqrt{\alpha_k \alpha_{k+1}}(\sqrt{\alpha_k} + \sqrt{\alpha_{k+1})}}
        \overset{\alpha_{k}\ge \alpha_{k+1}}{\ge} \frac{\alpha_k - \alpha_{k+1}}{2\alpha_k^{3/2}}
        \ge \frac{1}{3}.
    \]
    By telescoping this bound, we obtain $\frac{1}{\sqrt{\alpha_{k+1}}} \ge \frac{1}{\sqrt{\alpha_0}} + \frac{k+1}{3}\ge \frac{2}{3} + \frac{k+1}{3}=1+\frac{k}{3}$, which we can easily rearrange into the claim of the proposition.
\end{proof}

\subsection{Proof of \Cref{th:local}}
This proof reuses the results of other lemmas and is rather straightforward.
\begin{proof}
    Strong convexity of $f$ gives us $(\nabla^2 f(x^k) + \lambda_k \mI)^{-1} \preccurlyeq \frac{1}{\mu}\mI$. Hence,
    \begin{align*}
        r_k
        = \|x^{k+1}-x^k\|
        = \|(\nabla^2 f(x^k) + \lambda_k \mI)^{-1} \nabla f(x^k)\|
        \le \frac{1}{\mu}\|\nabla f(x^k)\|.
    \end{align*}
    Let us plug-in this upper bound into \Cref{lem:new_grad_bound}:
    \begin{align*}
        \|\nabla f(x^{k+1})\|
        &\overset{\eqref{eq:new_grad_bound}}{\le} Hr_k^2 + \lambda_k r_k \\
        &\le \frac{H}{\mu^2}\|\nabla f(x^k)\|^2 + \sqrt{H\|\nabla f(x^k)\|}\frac{1}{\mu}\|\nabla f(x^k)\| \\
        &= \left(\frac{H}{\mu^2}\|\nabla f(x^k)\|^{\frac{1}{2}} + \frac{\sqrt{H}}{\mu} \right)\|\nabla f(x^k)\|^{\frac{3}{2}}.
    \end{align*}
    From this, we can prove the statement by induction. Since we assume that $x^{k_0}$ satisfies $\|\nabla f(x^{k_0})\| \le \frac{\mu^2}{4H}$, we can also assume that for given $k\ge k_0$ it holds $\|\nabla f(x^{k})\|\le \frac{\mu^2}{4H}$. Then, from the bound above, we also get $\|\nabla f(x^{k+1})\|\le \|\nabla f(x^k)\|$, so by induction we have $\|\nabla f(x^{k+1})\|\le \|\nabla f(x^0)\|\le \frac{\mu^2}{4H}$. Thus,
    \begin{align*}
        \|\nabla f(x^{k+1})\|
        \le \left(\frac{H}{\mu^2}\|\nabla f(x^k)\|^{\frac{1}{2}} + \frac{\sqrt{H}}{\mu} \right)\|\nabla f(x^k)\|^{\frac{3}{2}}
        \le 2\frac{\sqrt{H}}{\mu}\|\nabla f(x^k)\|^{\frac{3}{2}},
    \end{align*}
    which guarantees superlinear convergence.
\end{proof}

\subsection{Proof of \Cref{th:ls}}
\begin{proof}
    Notice that the proof of \Cref{th:main} uses only the following three statements:
    \begin{align*}
        f(x^{k+1}) \le f(x^k) - \frac{2}{3}\lambda_k r_k^2,\\
        \|\nabla f(x^{k+1})\|\le 2\lambda_k r_k, \\
        \lambda_k r_k\le \|\nabla f(x^k)\|.
    \end{align*}
    The first two statements are directly assumed by the line search iteration. The third statement holds for arbitrary $\lambda_k\ge 0$ since
    \[
        r_k
        =\|x^{k+1}-x^k\|
        \overset{\eqref{eq:reg_newton_iteration}}{=} \|(\nabla^2 f(x^k) + \lambda_k\mI)^{-1}\nabla f(x^k)\|
        \le \frac{1}{\lambda_k}\|\nabla f(x^k)\|.
    \]
    So we are guaranteed that $f(x^k) - f^*=\cO\left(\frac{1}{k^2}\right)$. It remains to show that $N_k \le  2(k+1) + \max\left(0, \log_2 \frac{H}{H_0} \right)$. To prove this, notice that by definition of $n_k$, it holds
    \[
        H_{k}= 2^{n_k-2}H_{k-1},
    \]
    where $-2$ appears because $n_k$ and $H_{k}$ are first set to be 0 and $2^{-2}H_{k-1}$ correspondingly. Therefore,
    \[
        n_k= 2 + \log_2 \frac{H_k}{H_{k-1}}
        = 2 + \log_2 H_k - \log_2 H_{k-1}
    \]
    and, denoting the initialization as $H_{-1}=H_0$, we obtain
    \[
        N_k
        = \sum_{t=0}^k n_t
        = 2(k+1) + \sum_{t=0}^k (\log_2 H_t - \log_2 H_{t-1})
        = 2(k+1) + \log_2 \frac{H_{k}}{H_0}
        \le 2(k+1) + \max\left(0,\, \log_2 \frac{2H}{H_0}\right),
    \]
    where the last step used the fact that $H_{k}\le \max(2H, H_0)$ for any $k$.
\end{proof}

\subsection{Proof of \Cref{lem:ls_main}}
\begin{proof}
    Note that due to the positive semi-definiteness of $\mJ_k^\top \mJ_k$, we have
    \begin{equation*}
        \lambda_k\|x^{k+1}-x^k\|
        = \lambda_k\|(\mJ_k^\top \mJ_k + \lambda_k \mI)^{-1} \mJ_k^\top F(x^k)\|
        \le \|\mJ_k^\top F(x^k)\|.
    \end{equation*}
    By choosing $\lambda_k = \sqrt{c\|\mJ_k^\top F(x^k)\|}$, we can rewrite the upper bound above as
    \begin{equation}
        r_k = \|x^{k+1}-x^k\|
        \le \frac{1}{\lambda_k}\|\mJ_k^\top F(x^k)\|
        = \frac{1}{\lambda_k}\frac{\lambda_k^2}{c}
        = \frac{\lambda_k}{c}. \label{eq:r_smaller_lambda_for_ls}
    \end{equation}
    For the second claim, we first replace the Jacobian by the previous one and bound the error:
    \begin{align*}
        \|\mJ_{k+1}^\top F(x^{k+1})\|
        &\le \|\mJ_{k}^\top F(x^{k+1})\| + \|\mJ_{k+1} - \mJ_k\|\|F(x^{k+1})\| \\
        &\le \|\mJ_{k}^\top F(x^{k+1})\| + H\|x^{k+1} - x^k\|\|F(x^{k+1})\|.
    \end{align*}
    To bound the first term, let us use triangle inequality as follows:
    \begin{align*}
        \|\mJ_{k}^\top F(x^{k+1})\|
        &= \|\mJ_{k}^\top (F(x^{k+1}) - F(x^k) - \mJ_k(x^{k+1}-x^k))\| +  \|\mJ_k^\top(F(x^k) + \mJ_k(x^{k+1}-x^k)) \| \\
        &\le \|\mJ_{k}\| \|F(x^{k+1}) - F(x^k) - \mJ_k(x^{k+1}-x^k)\| +  \|\mJ_k^\top(F(x^k) + \mJ_k(x^{k+1}-x^k)) \| \\
        &\overset{\eqref{eq:ls_smooth}}{\le} J H \|x^{k+1} - x^k\|^2 + \|\mJ_k^\top(F(x^k) + \mJ_k(x^{k+1}-x^k)) \| \\
        &\overset{\eqref{eq:ls_identity}}{\le} J H \|x^{k+1} - x^k\|^2 + \lambda_k\|x^{k+1}-x^k \|.
    \end{align*}
    If we plug this back, we obtain
    \[
        \|\mJ_{k+1}^\top F(x^{k+1})\|
        \le JH\|x^{k+1}- x^k\|^2 + \lambda_k \|x^{k+1}- x^k\| + H\|x^{k+1}-x^k\|\|F(x^{k+1})\|,
    \]
    which is exactly our second claim.
\end{proof}

\subsection{Proof of \Cref{th:lm}}

\begin{proof}
    By \Cref{as:ls_smooth} we have
    \begin{align*}
        \|F(x^{k+1})\|^2
        &\overset{\eqref{eq:ls_cubic}}{\le} \|F(x^k) + \mJ_k (x^{k+1}-x^k)\|^2 + c\|x^{k+1}-x^k\|^3 \\
        &= \|F(x^k)\|^2 + 2\<F(x^k), \mJ_k(x^{k+1} - x^k)> + \|\mJ_k(x^{k+1} - x^k)\|^2 + c\|x^{k+1}-x^k\|^3 \\
        &= \|F(x^k)\|^2 + 2\<\mJ_k^\top F(x^k), x^{k+1} - x^k> + \<\mJ_k^\top \mJ_k(x^{k+1} - x^k), x^{k+1}-x^k> + c\|x^{k+1}-x^k\|^3 \\
        &= \|F(x^k)\|^2 + 2\<\mJ_k^\top F(x^k) + \mJ_k^\top\mJ_k(x^{k+1} - x^k), x^{k+1} - x^k> - \|\mJ_k(x^{k+1} - x^k)\|^2 \\
        &\quad + c\|x^{k+1}-x^k\|^3 \\
        &\le \|F(x^k)\|^2 + 2\<\mJ_k^\top F(x^k) + \mJ_k^\top\mJ_k(x^{k+1} - x^k), x^{k+1} - x^k> + c\|x^{k+1}-x^k\|^3 \\
        &\overset{\eqref{eq:ls_identity}}{=} \|F(x^k)\|^2 - 2\lambda_k\|x^{k+1} - x^k\|^2 + c\|x^{k+1}-x^k\|^3.
    \end{align*}
    By our choice of $\lambda_k$ we have
    \begin{align}
        \|F(x^{k+1})\|^2
        \le \|F(x^k)\|^2 - 2\lambda_k r_k^2 + cr_k^3
        \overset{\eqref{eq:r_to_c_lambda}}{\le }
        \|F(x^k)\|^2 - 2\lambda_k r_k^2 + \lambda_k r_k^2
        = \|F(x^k)\|^2 - \lambda_k r_k^2. \label{eq:recursion_ls}
    \end{align}
    Thus, we always have $\|F(x^{k+1})\|\le \|F(x^k)\|\le \dotsb\le \|F(x^0)\|$. Moreover, we can repeat this recursion until we reach $x^0$, which yields
    \[
        \|F(x^{k})\|^2
        \le \|F(x^{0})\|^2 - \sum_{t=0}^k \lambda_t r_t^2.
    \] 
    By rearranging the sum and replacing the summands with the minimum, we get the second claim of the theorem.
    
    Let us define $G\eqdef \|F(x^0)\|$. Using the fact that $\|F(x^k)\|\le \|F(x^0)\|=G$, we can now show that $r_k$ is bounded:
    \[
        r_k
        \overset{\eqref{eq:r_smaller_lambda_for_ls}}{\le} \frac{\lambda_k}{c}
        = \sqrt{\frac{1}{c}\|\mJ_k^\top F(x^k)\|}
        \le \sqrt{\frac{1}{c}JG}.
    \]
    We can also use monotonicity of $\|F(x^k)\|$ to simplify the second statement of \Cref{lem:ls_main}:
    \begin{align*}
        \|\mJ_{k+1}^\top F(x^{k+1})\|
        &\le JHr_k^2 + \lambda_k r_k + GHr_k \\
        &\le JH\sqrt{\frac{1}{c}JG}r_k + \sqrt{cJG}r_k + GHr_k \\
        &= \left(JH\sqrt{\frac{1}{c}JG} + \sqrt{cJG} + GH \right) r_k.
    \end{align*}
    This allows us to lower bound $r_k$, which we will use in recursion~\eqref{eq:recursion_ls}. To this end, let us introduce a new set of indices based on how the values of $\|\mJ_{k+1}^\top F(x^{k+1})\|$ change. We define
    \[
        \cI_{\infty} \eqdef \left\{ i\in \mathbb{N}: \frac{1}{5}\|\mJ_{i+1}^\top F(x^{i+1})\| \ge \|\mJ_{i}^\top F(x^{i})\| \right\}
    \]
    and $\cI_k\eqdef \{i\in\cI_{\infty} : i\le k\}$. We will consider two cases.
    
    First, consider the case $|\cI_k|\ge \frac{k}{\log(k+2)}$. Then, for any $i\in\cI_k$, we have
    \[
        \|\mJ_{i}^\top F(x^{i})\|
        \le 5\|\mJ_{i+1}^\top F(x^{i+1})\|
        \le 5\left(JH\sqrt{\frac{1}{c}JG} + \sqrt{cJG} + GH \right) r_i.
    \]
    Denote for simplicity $\theta = \frac{1}{5\left(JH\sqrt{\frac{1}{c}JG} + \sqrt{cJG} + GH \right)}$, so that $r_i\ge \theta \|\mJ_{i}^\top F(x^{i})\|$ for $i\in\cI_k$. Then,
    \[
        \|F(x^{k+1})\|^2
        \overset{\eqref{eq:recursion_ls}}{\le } \|F(x^0)\|^2 - \sum_{i=0}^k\lambda_i r_i^2
        \le \|F(x^0)\|^2 - \sum_{i\in\cI_k}\lambda_i r_i^2
        \le \|F(x^0)\|^2 - \sum_{i\in\cI_k}\theta\sqrt{c}\|\mJ_{i}^\top F(x^{i})\|^{5/2}.
    \]
    Since $|I_k|\ge \frac{k}{\log(k+1)}$, we get
    \[
        \min_{i\le k}\|\mJ_{i}^\top F(x^{i})\|^{5/2}
        \le \min_{i\in \cI_k}\|\mJ_{i}^\top F(x^{i})\|^{5/2}
        \le \frac{\|F(x^0)\|}{\theta\sqrt{c}|\cI_k|}
        = \cO\left(\frac{\log k}{k} \right).
    \]
    Thus, we have finished the case $|I_k|\ge \frac{k}{\log(k+1)}$.
    
    Now, consider the case $|I_k|\le \frac{k}{\log(k+1)}$. If for some $i\le k$ it holds $\|\mJ_{i}^\top F(x^{i})\|\le \frac{\theta^2c}{k^2}$, then we are done. Otherwise, take any $i\in\cI_k$ and write
    \[
        \|\mJ_{i+1}^\top F(x^{i+1})\|
        \le \theta r_i
        \le \theta\sqrt{\frac{1}{c}\|\mJ_{i}^\top F(x^{i})\|}
        = \frac{\theta\sqrt{c}}{\sqrt{\|\mJ_{i}^\top F(x^{i})\|}}\|\mJ_{i}^\top F(x^{i})\|
        \le k \|\mJ_{i}^\top F(x^{i})\|.
    \]
    Therefore, for $i\in\cI_k$, the norm increases at most by a factor of $k$, whereas for any $i\not\in\cI_k$, it decreases by at least a factor of 5.
    One can check numerically that $\max_{k\ge 1}\frac{3^k}{5^{k-k/\log(k+1)}}< 8$, which gives us
    \begin{align*}
        \|\mJ_{k}^\top F(x^{k})\|
        &= \|\mJ_{0}^\top F(x^{0})\|\prod_{i=0}^{k-1}\frac{\|\mJ_{i+1}^\top F(x^{i+1})\|}{\|\mJ_{i}^\top F(x^{i})\|} \\
        &= \|\mJ_{0}^\top F(x^{0})\|\prod_{i\in\cI_k}\frac{\|\mJ_{i+1}^\top F(x^{i+1})\|}{\|\mJ_{i}^\top F(x^{i})\|} \prod_{i\not\in\cI_k}\frac{\|\mJ_{i+1}^\top F(x^{i+1})\|}{\|\mJ_{i}^\top F(x^{i})\|}\\
        &\le \|\mJ_{0}^\top F(x^{0})\|k^{|\cI_k|}\cdot 5^{-|\cI_k|} \\
        &\le \|\mJ_{0}^\top F(x^{0})\|(k+1)^{k/\log(k+1)}\cdot\frac{3^k}{5^{k-k/\log(k+1)}}3^{-k} \\
        &\le \|\mJ_{0}^\top F(x^{0})\|\left((k+1)^{\log_{k+1}(e)}\right)^k\cdot 8\cdot3^{-k} \\
        &= 8\|\mJ_{0}^\top F(x^{0})\|\left(\frac{e}{3}\right)^k \\
        &= \cO\left(\frac{1}{k^{2/5}}\right),
    \end{align*}
    where the last step  uses the fact that $e<3$ and the exponential decay is dominated by $\cO\left(\frac{1}{k^{2/5}}\right)$.
\end{proof}

\end{document}